\documentclass[a4paper]{amsart}

\usepackage[utf8]{inputenc}
\usepackage{enumitem}
\usepackage{setspace}
\usepackage{mathtools}
\usepackage{tikz-cd}
\usepackage{lipsum}
\usepackage{tikz}
\usepackage{mathrsfs}
\usetikzlibrary{arrows}
\usepackage{amssymb}
\usetikzlibrary{decorations.pathmorphing}
\usepackage{bbold}
\usepackage{amsmath,amsthm}
\usepackage{xspace}
\usepackage{faktor}
\usepackage{calrsfs}
\usepackage{verbatim}
\usepackage{quiver}
\usepackage[pdfa]{hyperref}
\usepackage{amsfonts}
\usepackage{graphicx}
\usepackage{eucal}
\usepackage{amscd}
\usepackage[all,2cell]{xy}
\UseAllTwocells

\newtheorem{teo}{Theorem}[section]

\newtheorem{lem}[teo]{Lemma}
\newtheorem{defi}[teo]{Definition}
\newtheorem{oss}[teo]{Remark}
\newtheorem{prop}[teo]{Proposition}

\newcommand{\op}[1]{#1^{\mathrm{op}}}

\DeclareMathOperator{\Zker}{\mathcal{Z}-ker}

\newdir{ m>}{{}*!/-3.5pt/@{}*!/-8pt/:(1,-.2)@^{>}*!/-8pt/:(1,+.2)@_{>}}

\def\pullback{
 \ar@{-}[]+R+<6pt,-1pt>;[]+RD+<6pt,-6pt>%
 \ar@{-}[]+D+<1pt,-6pt>;[]+RD+<6pt,-6pt>}

\begin{document}

\title[Homological lemmas in a non-pointed context]{Homological lemmas in a non-pointed context}

\author[A. Cappelletti]{Andrea Cappelletti}
\address[Andrea Cappelletti]{Dipartimento di Matematica, Universit\`{a} degli Studi di Salerno, Via Giovanni Paolo II 132, 84084 Fisciano (SA), Italy}
\thanks{}
\email{acappelletti@unisa.it}

\author[A. Montoli]{Andrea Montoli}
\address[Andrea Montoli]{Dipartimento di Matematica ``Federigo Enriques'', Universit\`{a} degli Studi di Milano, Via Saldini 50, 20133 Milano, Italy}
\thanks{} \email{andrea.montoli@unimi.it}

\keywords{homological lemmas, non-pointed regular protomodular categories}

\subjclass[2020]{18G50, 18A20, 18E08, 18E13, 03C05}

\begin{abstract}
We show that non-pointed versions of the classical homological lemmas hold in regular protomodular categories equipped with a suitable posetal monocoreflective subcategory. Examples of such categories are all protomodular varieties of universal algebras having more than one constant, like the ones of unitary rings, Boolean algebras, Heyting algebras and MV-algebras, their topological models, and the dual category of every elementary topos.
\end{abstract}

\date{\today}

\maketitle

\section{Introduction}
Homological categories, namely pointed regular protomodular categories, have been shown to constitute the good context in which the non-abelian versions of the classical homological lemmas, such as the short five lemma, the nine lemma, and the Noether isomorphism theorems, hold (see \cite{Bbbook} for a detailed description of homological categories and their properties). In particular, for pointed categories, protomodularity \cite{Bourn protomod} is equivalent to the validity of the split short five lemma and, for pointed regular categories, protomodularity is equivalent to the validity of the short five lemma. A key fact in order to have a good behaviour of (short) exact sequences is that, in a pointed protomodular category, every regular epimorphism is a cokernel. Examples of homological categories are those of groups, non-unitary rings, associative algebras, Lie algebras, topological groups and many others.

Concerning the nine lemma, Bourn proved in \cite{Bourn 3x3} that it holds in every homological category, and also in every regular protomodular quasi-pointed category (meaning that the unique morphism from the initial object to the terminal one is a monomorphism). Replacing short exact sequences with exact forks, he considered a denormalized version of the nine lemma, proving in \cite{Bourn denormalized 3x3} that it holds in regular Mal'tsev \cite{CLP Malt'sev cats} categories. Later, Lack \cite{Lack 3x3} showed that the denormalized nine lemma holds also in regular Goursat \cite{CLP Malt'sev cats, CKP Goursat cats} categories. In \cite{GJR 3x3 lemma} the authors proposed a framework for a common description of these two versions of the nine lemma. Such a unified framework is the one of star-regular categories \cite{GJU star-regular}, which is based on the notion of an ideal of morphisms in the sense of \cite{Ehresmann}. Given an ideal $\mathcal{N}$ of morphisms in a category $\mathcal{C}$, a star is a pair $(k_1, k_2)$ of parallel morphisms such that $k_1 \in \mathcal{N}$. The \emph{star-kernel} of a morphism $f$ is a universal star with respect to the property that $fk_1 = fk_2$. A regular category equipped with an ideal $\mathcal{N}$ of morphisms is \emph{star-regular} if every regular epimorphism is the coequalizer of its star-kernel. In \cite{GJR 3x3 lemma} it is shown that, in a star-regular category with ``enough trivial objects'' the upper and the lower nine lemmas are equivalent. Moreover, under mild additional assumptions, the middle version of the nine lemma is equivalent to a version of the short five lemma relative to stars. These results cover the known ones concerning the homological lemmas in the pointed and quasi-pointed contexts (where $\mathcal{N}$ is the class of morphisms that factor through $0$) as well as the denormalized versions of them (where $\mathcal{N}$ is the class of all morphisms).

However, this context excludes several interesting examples in which some forms of the nine lemma are valid, like unitary rings, Boolean algebras, Heyting algebras, MV-algebras and, more generally, protomodular varieties of universal algebras having more than one constant. The aim of the present paper is to introduce a categorical framework which includes all the examples just mentioned, and in which suitable forms of the homological lemmas hold. We consider regular protomodular categories $\mathcal{C}$ equipped with a full, posetal, monocoreflective subcategory $\mathcal{Z}$ of ``zero objects'' such that the reflector inverts monomorphisms.

Our context is related to the one considered in \cite{GrandisJanMarki} in order to study radical theory and closure operators in a non-pointed situation. However, in \cite{GrandisJanMarki} the authors considered subcategories that are both reflective and coreflective, and so their framework does not include the main examples we are interested in. Pointed and quasi-pointed regular protomodular categories are examples of our situation. Another large class of examples is given by regular protomodular categories with initial object in which the unique morphism $0 \to 1$ is a regular epimorphism. This includes, in particular, \emph{ideally exact categories} in the sense of \cite{Janelidze ideally exact} (that are, moreover, required to be Barr-exact), among which there are the dual categories of all elementary toposes and all protomodular varieties of universal algebras with more than one constant. Moreover, the topological models of protomodular theories with more than one constant are examples of our situation (thanks to the observation, made in \cite{BorceuxClementino protomod top alg}, that such models are regular protomodular categories).

An apparently disappointing fact happening in our context is that, denoting with $\mathcal{N}_{\mathcal{Z}}$ the ideal of morphisms in $\mathcal{C}$ that factor through $\mathcal{Z}$, $\mathcal{N}_{\mathcal{Z}}$-kernels always exist and are easy to compute, while $\mathcal{N}_{\mathcal{Z}}$-cokernels do not always exist and, moreover, regular epimorphisms are not always $\mathcal{N}_{\mathcal{Z}}$-cokernels. This creates a difference, with respect to the classical case and to the one of star-regular categories, concerning the definition of short exact sequence. We show that this difficulty can be encompassed by defining a short exact sequence as a regular epimorphism with its $\mathcal{N}_{\mathcal{Z}}$-kernel. Using this notion of short exact sequence, we recover, in our wide context, the validity of the short five lemma and of the nine lemma.

\section{The context}
We work in a category $\mathcal{C}$ with a fixed full, posetal, monocoreflective subcategory $\mathcal{Z}$. We denote by $\mathsf{Z} \colon \mathcal{C} \to \mathcal{Z}$ the coreflector (and, without loss of generality, we suppose that $\mathsf{Z}$ is the identity on the subcategory $\mathcal{Z}$). Moreover, we require that $\mathsf{Z}$ inverts monomorphisms, meaning that, if $m$ is a monomorphism, then $\mathsf{Z}(m)$ is an isomorphism. We will call \emph{trivial} or \emph{zeros} the objects in the class $\mathcal{Z}$.

\begin{prop}\label{contesto generale}
Consider a category $\mathcal{C}$ and a full subcategory $\mathcal{Z}$. The following conditions are equivalent:
\begin{itemize}
    \item[(a)] $\mathcal{Z}$ is a posetal monocoreflective subcategory such that the coreflector inverts monomorphisms;
    \item[(b)] \begin{itemize}
        \item[(i)] for every $A \in \mathcal{C}$ there exists a, unique up to isomorphisms, monomorphism $\varepsilon_A \colon Z \rightarrowtail A$, with $Z \in \mathcal{Z}$;
        \item[(ii)] for every $A \in \mathcal{C}$ and $Z \in \mathcal{Z}$, $\text{Hom}(Z,A)$ has at most one element;
        \item[(iii)] for every pair of arrows $z \colon Z \rightarrowtail A$ and $z' \colon Z' \to A$, where $z$ is a monomorphism and $Z,Z' \in \mathcal{Z}$, there exists a unique morphism $\varphi \colon Z' \to Z$ such that $z\varphi=z'$:
\[ \xymatrix{ Z' \ar[d]_{\exists ! \varphi} \ar[r]^{z'} & A \\
Z. \ar@{ m>->}[ur]_z & } \]
    \end{itemize}
\end{itemize}
\end{prop}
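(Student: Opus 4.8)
The plan is to prove the two implications separately, the key observation being that monocoreflectivity of $\mathcal{Z}$ is exactly the statement that every object $A \in \mathcal{C}$ admits a couniversal arrow from the inclusion $i \colon \mathcal{Z} \to \mathcal{C}$ whose underlying morphism is a monomorphism, and that condition (iii) is a hands-on reformulation of the universal property of such a couniversal arrow. Throughout I would use that, since $i$ is full and faithful and $\mathsf{Z}$ is taken to be the identity on $\mathcal{Z}$, the unit of the adjunction $i \dashv \mathsf{Z}$ is the identity.

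For (a) $\Rightarrow$ (b), I would take $\varepsilon_A \colon \mathsf{Z}(A) \rightarrowtail A$ to be the counit of the coreflection, which is a monomorphism by hypothesis; this gives the existence half of (i). For (ii), the adjunction yields a bijection $\mathrm{Hom}_{\mathcal{C}}(Z,A) \cong \mathrm{Hom}_{\mathcal{Z}}(Z,\mathsf{Z}(A))$ for $Z \in \mathcal{Z}$, and the right-hand side has at most one element since $\mathcal{Z}$ is posetal. Condition (iii), together with the uniqueness-up-to-isomorphism in (i), is where the hypothesis that $\mathsf{Z}$ inverts monomorphisms enters: for a monomorphism $z \colon Z \rightarrowtail A$ with $Z \in \mathcal{Z}$, its adjoint transpose $\bar z \colon Z \to \mathsf{Z}(A)$ equals $\mathsf{Z}(z)$ (because the unit is the identity on $\mathcal{Z}$), hence is an isomorphism; transposing an arbitrary $z' \colon Z' \to A$ to $\bar{z'} \colon Z' \to \mathsf{Z}(A)$ and setting $\varphi = \bar z^{\,-1}\bar{z'}$ yields the required factorization $z\varphi = z'$, whose uniqueness is immediate since $z$ is monic.

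For (b) $\Rightarrow$ (a), I would define $\mathsf{Z}$ on objects by choosing, for each $A$, the monomorphism $\varepsilon_A \colon \mathsf{Z}(A) \rightarrowtail A$ provided by (i). Condition (iii) says precisely that every $f \colon W \to A$ with $W \in \mathcal{Z}$ factors uniquely as $\varepsilon_A \circ \bar f$ with $\bar f$ a morphism of $\mathcal{Z}$, i.e. that $\varepsilon_A$ is a couniversal arrow from $i$ to $A$; by the standard construction of an adjoint from a family of universal arrows, $A \mapsto \mathsf{Z}(A)$ then extends uniquely to a functor right adjoint to $i$ with counit $\varepsilon$, so $\mathcal{Z}$ is coreflective, and monocoreflective since each $\varepsilon_A$ is monic. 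That $\mathcal{Z}$ is posetal (thin) follows from (ii) applied with $A \in \mathcal{Z}$. Finally, to see that $\mathsf{Z}$ inverts monomorphisms, for a monomorphism $m \colon A \rightarrowtail B$ the composite $m\varepsilon_A \colon \mathsf{Z}(A) \rightarrowtail B$ is again a monomorphism with domain in $\mathcal{Z}$; applying (iii) in both directions to the two monomorphisms $\varepsilon_B$ and $m\varepsilon_A$ into $B$ produces mutually inverse comparison maps, hence an isomorphism $\theta \colon \mathsf{Z}(A) \to \mathsf{Z}(B)$ with $\varepsilon_B\theta = m\varepsilon_A$. Comparing this with the naturality square $\varepsilon_B \circ \mathsf{Z}(m) = m \circ \varepsilon_A$ of the counit and using that $\varepsilon_B$ is monic forces $\mathsf{Z}(m) = \theta$, so $\mathsf{Z}(m)$ is an isomorphism.

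The main obstacle I anticipate is not a single computation but the careful bookkeeping that translates the inversion-of-monomorphisms hypothesis into concrete factorization data and back. The two genuinely delicate points are: identifying the adjoint transpose of a monomorphism out of $\mathcal{Z}$ with $\mathsf{Z}$ applied to it, so as to conclude it is invertible (in the direction (a) $\Rightarrow$ (b)); and pinning down $\mathsf{Z}(m)$ through the naturality of the counit and the monomorphy of $\varepsilon_B$ (in the direction (b) $\Rightarrow$ (a)). I would also be mildly careful about the word \emph{posetal}: condition (ii) delivers thinness directly, while skeletality, if intended, is to be read through the ``unique up to isomorphism'' clause already built into (i).
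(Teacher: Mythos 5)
Your proposal is correct and follows essentially the same route as the paper's proof: in both directions the counit/adjoint-transpose bookkeeping, the construction of the coreflector from conditions (i) and (iii), posetality from (ii), and the use of (iii) to show $\mathsf{Z}$ inverts monomorphisms all match the paper's argument. The only cosmetic differences are that you invoke the standard universal-arrows-yield-adjoints theorem where the paper defines $\mathsf{Z}$ on morphisms explicitly via (iii), and your inversion of $\mathsf{Z}(m)$ proceeds by building mutually inverse comparison maps and identifying $\mathsf{Z}(m)$ through naturality, whereas the paper shows $\mathsf{Z}(m)$ is a monomorphism admitting a section, hence an isomorphism.
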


\begin{proof}
(b)$\Rightarrow$(a) For every $A \in \mathcal{C}$ we fix an object of $\mathcal{Z}$, denoted by $\mathsf{Z}(A)$, such that there is a monomorphism $\varepsilon_A \colon \mathsf{Z}(A) \rightarrowtail A$ (such an object is unique up to isomorphisms). We want to define a functor $\mathsf{Z} \colon \mathcal{C} \to \mathcal{Z}$. Given an arrow $f \colon A \to B$ of $\mathcal{C}$, we define $\mathsf{Z}(f)$ thanks to Condition (iii):
\[ \xymatrix{ \mathsf{Z}(A) \ar[d]_{\exists ! \mathsf{Z}(f)} \ar@{ m>->}[r]^{\varepsilon_A} & A \ar[r]^f & B \\
\mathsf{Z}(B). \ar@{ m>->}[urr]_{\varepsilon_B} & & } \]
Clearly $\mathsf{Z}$ is a functor. We prove that $\mathsf{Z}$ is right adjoint to the inclusion $i \colon \mathcal{Z} \hookrightarrow \mathcal{C}$. We do this by showing that the collection of arrows given by $\varepsilon_A$ for $A \in \mathcal{C}$ satisfies the universal property of the counit. In fact, for every arrow $z \colon Z \to A$ (where $Z \in \mathcal{Z}$), thanks to Condition (iii), there exists a unique $\varphi \colon Z \to \mathsf{Z}(A)$ such that the following diagram is commutative:
\[ \xymatrix{ & Z \ar[dl]_{\exists ! \varphi} \ar[dr]^z & \\
\mathsf{Z}(A) \ar@{ m>->}[rr]_{\varepsilon_A} & & A. } \]
Hence $\mathcal{Z}$ is a monocoreflective subcategory of $\mathcal{C}$. Now, thanks to Condition (ii) we know that $\mathcal{Z}$ is a posetal category. Finally, we need to show that $\mathsf{Z}$ inverts monomorphisms. Consider a monomorphism $m \colon A \rightarrowtail B$; since $\varepsilon_B \mathsf{Z}(m)=m \varepsilon_A$ we deduce that $\mathsf{Z}(m)$ is a monomorphism of $\mathcal{C}$, too. Now, thanks to (iii) we conclude that $\mathsf{Z}(m)$ is an isomorphism, considering the commutative diagram
\[ \xymatrix{ \mathsf{Z}(B) \ar[d]_{\exists ! \theta} \ar@{=}[r] & \mathsf{Z}(B) \\
\mathsf{Z}(A). \ar@{ m>->}[ur]_{\mathsf{Z}(m)} } \]

(a)$\Rightarrow$(b)
\begin{itemize}
\item[(i)] Since $\mathsf{Z}$ is a monocoreflector, we can take as monomorphism \linebreak $\varepsilon_A \colon \mathsf{Z}(A) \rightarrowtail A$; this monomorphism is unique up to isomorphisms since $\mathsf{Z}$ inverts monomorphisms.
\item[(ii)] Consider a pair of arrows $f_1,f_2 \colon Z \rightarrow A$. Thanks to the universal property of the counit, we observe that there exist $\varphi_1, \varphi_2 \colon Z \rightarrow \mathsf{Z}(A)$ such that $\varepsilon_A \varphi_1=f_1$ and $\varepsilon_A \varphi_2=f_2$. But $\mathcal{Z}$ is a posetal category, hence $\varphi_1=\varphi_2$ and so $f_1=f_2$.
\item[(iii)] Consider a pair of arrows $z \colon Z \rightarrowtail A$ and $z' \colon Z' \to A$, where $z$ is a monomorphism and $Z,Z' \in \mathcal{Z}$. Due to naturality, we observe that $\varepsilon_A \mathsf{Z}(z)=z$. Finally, thanks to the universal property of the counit $\varepsilon$, we deduce that there exists a unique $\psi$ such that $\varepsilon_A \psi = z'$; hence, we define $\varphi= \mathsf{Z}(z)^{-1} \psi$ and we observe that $z \varphi=z \mathsf{Z}(z)^{-1} \psi= \varepsilon_A \psi= z'$.
\end{itemize}
\end{proof}

\begin{oss} \label{uniqueness of trivial class}
In a category $\mathcal{C}$ there is at most one subcategory $\mathcal{Z}$ satisfying the conditions of Proposition \ref{contesto generale}. So, admitting such a subcategory is a property of the category $\mathcal{C}$.
\end{oss}

\begin{proof}
Suppose $\mathcal{C}$ has two full, replete subcategories $\mathcal{Z}$ and $\mathcal{Y}$ satisfying the conditions of Proposition \ref{contesto generale}, with coreflectors $\mathsf{Z}$ and $\mathsf{Y}$, respectively. Let $Y$ be an object of $\mathcal{Y}$; then we have a monomorphism $\varepsilon_Y \colon \mathsf{Z}(Y) \to Y$. Its image $\mathsf{Y}(\varepsilon_Y) \colon \mathsf{Y}(\mathsf{Z}(Y)) \to \mathsf{Y}(Y)=Y$ under the coreflector $\mathsf{Y}$ is an isomorphism. Since $\mathsf{Y}(\mathsf{Z}(Y))$ is a subobject of $\mathsf{Z}(Y)$, we get that $Y \cong \mathsf{Z}(Y)$ belongs to $\mathcal{Z}$, so that $\mathcal{Y} \subseteq \mathcal{Z}$. The other inclusion can be proved similarly.
\end{proof}

Given an object $A$ of $\mathcal{C}$, we will say that $\mathsf{Z}(A)$ is the \emph{zero part} of $A$.

\begin{prop}
Under the equivalent conditions of Proposition \ref{contesto generale}, we have:
\begin{itemize}
    \item[(a)] every morphism $f \colon A \to Z$, where $Z \in \mathcal{Z}$, is a strong epimorphism;
    \item[(b)] every monomorphism $s \colon S \rightarrowtail Z$, where $Z \in \mathcal{Z}$, is an isomorphism.
\end{itemize}
\end{prop}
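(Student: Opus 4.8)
The plan is to prove (b) first, since it is the shorter statement and isolates the key mechanism (that $\mathsf{Z}$ inverts monomorphisms), and then to reuse that mechanism for the diagonal fill in (a).

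For (b): let $s\colon S\rightarrowtail Z$ be a monomorphism with $Z\in\mathcal{Z}$. I would apply the coreflector to get an isomorphism $\mathsf{Z}(s)\colon\mathsf{Z}(S)\to\mathsf{Z}(Z)=Z$, using the hypothesis that $\mathsf{Z}$ inverts monomorphisms together with $\mathsf{Z}$ being the identity on $\mathcal{Z}$. Naturality of the counit yields $s\circ\varepsilon_S=\varepsilon_Z\circ\mathsf{Z}(s)$; the component $\varepsilon_Z$ is an isomorphism (in fact $\varepsilon_Z=\mathrm{id}_Z$, by Condition (ii) applied to $\mathrm{Hom}(Z,Z)$), so the right-hand side is invertible and $s$ therefore admits a section. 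Being at once a monomorphism and a split epimorphism, $s$ is an isomorphism.

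For (a): fix $f\colon A\to Z$ with $Z\in\mathcal{Z}$. I would first note that $f$ is an epimorphism for a trivial reason, namely that by Condition (ii) any two parallel arrows out of $Z$ coincide, so the cancellation property holds automatically. It then remains to check the diagonal lifting property against monomorphisms. Given a commutative square $mu=vf$ with $m\colon C\rightarrowtail D$ monic, $u\colon A\to C$ and $v\colon Z\to D$, I would manufacture the filler $w\colon Z\to C$ from the zero parts of $C$ and $D$: since $v$ has domain in $\mathcal{Z}$, Condition (iii) (equivalently, the universal property of the counit) factors it as $v=\varepsilon_D\circ\tilde v$ through $\varepsilon_D\colon\mathsf{Z}(D)\rightarrowtail D$, while monicity of $m$ makes $\mathsf{Z}(m)\colon\mathsf{Z}(C)\to\mathsf{Z}(D)$ invertible. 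Putting $w:=\varepsilon_C\circ\mathsf{Z}(m)^{-1}\circ\tilde v$ and invoking naturality in the form $m\circ\varepsilon_C=\varepsilon_D\circ\mathsf{Z}(m)$ gives $mw=\varepsilon_D\circ\tilde v=v$; the remaining triangle $wf=u$ drops out by cancelling $m$ in $mwf=vf=mu$, and uniqueness of $w$ is immediate since $m$ is monic.

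The one genuinely delicate point is the construction of the diagonal $w$ in (a); everything else is formal cancellation. Its existence relies precisely on the two facts that $v$ can be pushed through the zero part $\mathsf{Z}(D)$ of its codomain and that $\mathsf{Z}(m)$ is invertible, so the standing hypothesis that the coreflector inverts monomorphisms is exactly what drives the argument. I would take care to use the normalization $\varepsilon_Z=\mathrm{id}_Z$ consistently, as it is what lets me identify $\mathsf{Z}(Z)$ with $Z$ throughout both parts.
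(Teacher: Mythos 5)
Your proposal is correct, and your proof of (a) --- factoring the right-hand leg $v$ of the square through the zero part of its codomain via the counit, inverting $\mathsf{Z}(m)$, and setting the diagonal to be $\varepsilon_C \circ \mathsf{Z}(m)^{-1} \circ \tilde v$ --- is exactly the paper's argument, down to the final cancellation of $m$. The only divergence is organizational: the paper deduces (b) in one line from (a) (a monomorphism that is also a strong epimorphism is an isomorphism), whereas you prove (b) independently by exhibiting the section $\varepsilon_S \circ \mathsf{Z}(s)^{-1}$ via naturality of the counit; both are valid, and your explicit verification that $f$ is an epimorphism via Condition (ii) makes explicit a point the paper leaves implicit in the phrase ``strong epimorphism.''
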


\begin{proof}
\begin{itemize}
    \item[(a)] Consider the commutative square
\[ \xymatrix{ A \ar[d]_g \ar[r]^f & Z \ar[d]^h \\
B \ar@{ m>->}[r]_m & C, } \]
where $m$ is a monomorphism. We recall that $\mathsf{Z}(m) \colon \mathsf{Z}(B) \to \mathsf{Z}(C)$ is an isomorphism. Moreover, since $Z \in \mathcal{Z}$, there exists a morphism $\theta \colon Z \to \mathsf{Z}(C)$ such that $\varepsilon_C \theta = h$. Hence, we can define $d = \varepsilon_B \mathsf{Z}(m)^{-1} \theta$ and we observe that $m d= m \varepsilon_B \mathsf{Z}(m)^{-1} \theta= \varepsilon_C \mathsf{Z}(m) \mathsf{Z}(m)^{-1} \theta= \varepsilon_C \theta = h$; moreover, since $m$ is a monomorphism, we get $df=g$. Therefore, $f$ is a strong epimorphism.

\item[(b)] follows immediately from (a).
\end{itemize}
\end{proof}

We denote by $\mathcal{N}_{\mathcal{Z}}$ the class of arrows of $\mathcal{C}$ factorizing
through an object of $\mathcal{Z}$. $\mathcal{N}_{\mathcal{Z}}$ is clearly an ideal of morphisms in the sense of \cite{Ehresmann}. Moreover:

\begin{oss}
If $f \in \mathcal{N}_{\mathcal{Z}}$, then we can construct a factorization $f=m \chi$ of $f$ through an object of $\mathcal{Z}$, with $m$ a monomorphism. In fact, since $f \in \mathcal{N}_{\mathcal{Z}}$, we have a factorization of the form
\[ \xymatrix{ A \ar[dr]_a \ar[rr]^f & & B \\
& Z, \ar[ur]_b & } \]
where $Z \in \mathcal{Z}$. Observing that $b=\varepsilon_B \mathsf{Z}(b)$, we obtain $f=\varepsilon_B \mathsf{Z}(b)a$, and $\varepsilon_B$ is a monomorphism.
\end{oss}

\begin{prop} \label{cancellation in NZ with strong epis}
If $f$ is a strong epimorphism and $gf \in \mathcal{N}_{\mathcal{Z}}$, then $g \in \mathcal{N}_{\mathcal{Z}}$.
\end{prop}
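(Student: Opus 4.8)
The plan is to reduce the statement to the defining diagonal fill-in property of strong epimorphisms. Write $f \colon A \to B$ and $g \colon B \to C$. Since $gf \in \mathcal{N}_{\mathcal{Z}}$, the preceding Remark lets me factor $gf = m\chi$, where $\chi \colon A \to Z$ for some $Z \in \mathcal{Z}$ and $m \colon Z \rightarrowtail C$ is a monomorphism. This is the crucial preliminary step: it upgrades the mere factorization of $gf$ through a trivial object to one whose second leg is monic, which is exactly what will allow the strong-epimorphism property to be invoked.

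Next I would consider the commutative square
\[ \xymatrix{ A \ar[d]_f \ar[r]^{\chi} & Z \ar[d]^m \\
B \ar[r]_g \ar@{-->}[ur]|{d} & C, } \]
in which the equality $gf = m\chi$ holds by construction. Here the left vertical arrow $f$ is a strong epimorphism and the right vertical arrow $m$ is a monomorphism, so the diagonal fill-in axiom provides a (unique) morphism $d \colon B \to Z$ with $df = \chi$ and $md = g$. The second equality exhibits $g$ as factoring through the object $Z \in \mathcal{Z}$, whence $g \in \mathcal{N}_{\mathcal{Z}}$, as desired.

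The only delicate point is the one already isolated in the first step: a generic factorization of $gf$ through a trivial object need not have a monic comparison map into $C$, and without monicity the orthogonality between strong epimorphisms and monomorphisms cannot be applied. The Remark removes precisely this obstacle, after which the argument is immediate; I therefore expect the write-up to be short, with essentially all the content concentrated in the choice of a monic factorization.
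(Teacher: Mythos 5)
Your proposal is correct and is essentially the paper's own proof: the paper likewise invokes the preceding Remark to factor $gf = \varepsilon_C \chi$ through the monomorphism $\varepsilon_C \colon \mathsf{Z}(C) \rightarrowtail C$, and then applies the diagonal fill-in property of the strong epimorphism $f$ against this monomorphism to obtain $d$ with $\varepsilon_C d = g$. The only cosmetic difference is that you work with a generic monic factorization $m\chi$ rather than the canonical one through $\varepsilon_C$, which changes nothing in the argument.
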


\begin{proof}
Since $gf \in \mathcal{N}_{\mathcal{Z}}$, we get $gf=\varepsilon_C \chi$ for some arrow $\chi$, i.e.\ the diagram below is commutative:
\[ \xymatrix{ A \ar[d]_{\chi} \ar[r]^f & B \ar[d]^g \\
\mathsf{Z}(C) \ar@{ m>->}[r]_{\varepsilon_C} & C. } \]
Therefore, recalling that $f$ is a strong epimorphism, there exists a morphism $d \colon B \to \mathsf{Z}(C)$ such that $\varepsilon_C d=g$, and so $g \in \mathcal{N}_{\mathcal{Z}}$.
\end{proof}

\section{Examples}
In this section we describe several examples of the situation considered in the previous one. We first consider a general class of categories satisfying the conditions of Proposition \ref{contesto generale}, and then some concrete cases.

Let $\mathcal{C}$ be a regular category with initial object $0$. We denote by $\mathcal{Z}$ the class of objects of $\mathcal{C}$ that are regular quotients of $0$, i.e.\ those objects $Z$ such that the unique morphism $0 \to Z$ is a regular epimorphism.

\begin{prop}
The class $\mathcal{Z}$ defined above satisfies the equivalent conditions of Proposition \ref{contesto generale}.
\end{prop}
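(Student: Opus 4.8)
The plan is to verify condition (b) of Proposition \ref{contesto generale}, exploiting the (regular epi, mono) factorization system available in any regular category together with the initiality of $0$. For each object $A$, I would factor the unique morphism $0 \to A$ as a regular epimorphism $0 \twoheadrightarrow \mathsf{Z}(A)$ followed by a monomorphism $\varepsilon_A \colon \mathsf{Z}(A) \rightarrowtail A$. By construction the object $\mathsf{Z}(A)$ is a regular quotient of $0$, hence $\mathsf{Z}(A) \in \mathcal{Z}$, and $\varepsilon_A$ is the monomorphism required by (i). Its uniqueness up to isomorphism would follow from the essential uniqueness of the (regular epi, mono) factorization: any monomorphism $m \colon Z \rightarrowtail A$ with $Z \in \mathcal{Z}$ exhibits $0 \to A$ as the regular epimorphism $0 \to Z$ followed by $m$, and this factorization must therefore agree with $\varepsilon_A$ up to a unique isomorphism $Z \cong \mathsf{Z}(A)$ compatible with the two monomorphisms.

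For (ii), I would use that, whenever $Z \in \mathcal{Z}$, the canonical arrow $0 \to Z$ is a regular epimorphism, hence in particular an epimorphism. Given two arrows $f_1, f_2 \colon Z \to A$, their composites with $0 \to Z$ are both equal to the unique arrow $0 \to A$, because $0$ is initial; cancelling the epimorphism $0 \to Z$ on the right yields $f_1 = f_2$, so $\mathrm{Hom}(Z,A)$ has at most one element.

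Condition (iii) is where regularity is used most essentially, through the orthogonality of regular epimorphisms to monomorphisms (the diagonal fill-in property of the factorization system). Given a monomorphism $z \colon Z \rightarrowtail A$ and an arrow $z' \colon Z' \to A$ with $Z, Z' \in \mathcal{Z}$, I would assemble the commutative square whose left edge is the regular epimorphism $0 \to Z'$, whose right edge is the monomorphism $z$, whose top edge is $0 \to Z$ and whose bottom edge is $z'$; commutativity holds since both composites $0 \to A$ coincide by initiality. The unique diagonal filler $\varphi \colon Z' \to Z$ supplied by orthogonality then satisfies $z\varphi = z'$, which is exactly the required factorization, while its uniqueness is already guaranteed by (ii) (or directly by the fact that $z$ is a monomorphism).

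I expect the only genuine obstacle to be the careful invocation of the diagonal fill-in in step (iii): one must check that the relevant square indeed commutes and confirm that orthogonality of regular epimorphisms against monomorphisms is the correct tool to extract $\varphi$. The remaining verifications are formal consequences of the factorization system and of the initiality of $0$.
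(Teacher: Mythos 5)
Your proposal is correct, and for conditions (i) and (ii) it coincides with the paper's proof: the same regular epi--mono factorization of $0 \to A$ gives $\varepsilon_A$, with uniqueness from essential uniqueness of factorizations, and the same epi-cancellation argument with $0 \to Z$ gives posetality. Where you genuinely diverge is condition (iii). The paper factors the given arrow $z' \colon Z' \to A$ itself as a regular epi followed by a mono, observes that the image is again in $\mathcal{Z}$ (this tacitly uses that composites of regular epimorphisms are regular epimorphisms in a regular category, so that the image is a regular quotient of $0$), and then identifies that image with $\varepsilon_A$, hence with $z$, by uniqueness of factorizations. You instead invoke the orthogonality of regular epimorphisms against monomorphisms directly, filling the diagonal of the square with left edge $0 \twoheadrightarrow Z'$ and right edge $z$; the square commutes by initiality, and uniqueness of $\varphi$ follows from $z$ being monic. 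Your route is slightly more economical: it handles an arbitrary monomorphism $z$ with trivial domain in one step (the paper implicitly routes through the comparison with $\varepsilon_A$ from (i)) and it avoids needing closure of $\mathcal{Z}$ under quotients of its objects, i.e.\ the composability of regular epis. Both arguments are, of course, two faces of the same (regular epi, mono) factorization system, so the difference is one of bookkeeping rather than of substance.
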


\begin{proof}
We prove that $\mathcal{Z}$ satisfies the conditions stated in (b).
\begin{itemize}
\item[(i)] We define $\varepsilon_A \colon \mathsf{Z}(A) \rightarrowtail A$ by taking the monomorphic part of the regular epi-mono factorization of $i_A \colon 0 \to A$. Suppose $m \colon Z \rightarrowtail A$ and $m' \colon Z' \rightarrowtail A$ are subobjects of $A$, with $Z, Z' \in \mathcal{Z}$. Then we have the following commutative diagram:
\[ \xymatrix{ & Z \ar@{ m>->}[dr]^m & \\
0 \ar@{->>}[ur] \ar@{->>}[dr] & & A \\
& Z'. \ar@{ m>->}[ur]_{m'} & } \]
Thanks to the uniqueness, up to isomorphisms, of the regular epi-mono factorization, we get that $Z$ and $Z'$ are isomorphic.

\item[(ii)] Given two morphisms $f, g \colon Z \to A$, composing them with the unique morphism $i_Z \colon 0 \to Z$ we get $f i_Z = g i_Z$, and then $f = g$ because $i_Z$ is an epimorphism.

\item[(iii)] Given an arrow $z' \colon Z' \to A$ we take its regular epi-mono factorization $z'=me$ and we consider the commutative diagram
\[ \xymatrix{ 0 \ar[rr]^{i_A} \ar@{->>}[dr]_{i_{Z'}} & & A \\
& Z' \ar[ur]^{z'} \ar@{->>}[r]_e & Z, \ar@{ m>->}[u]_m } \]
where $i_{Z'}$ is a regular epimorphism thanks to the definition of the class $\mathcal{Z}$, and $Z \in \mathcal{Z}$ since it is a quotient of $Z'$. Due to the uniqueness of the regular epi-mono factorization, we obtain that $m$ is isomorphic to $\varepsilon_A$, and so the claim follows.
\end{itemize}
\end{proof}

Thanks to Remark \ref{uniqueness of trivial class}, we can observe that, for a regular category with an initial object $0$, the class of regular quotients of $0$ is the only subcategory satisfying the conditions of Proposition \ref{contesto generale}. Moreover, if a category $\mathcal{C}$ with products is equipped with a subcategory $\mathcal{Z}$ as in Proposition \ref{contesto generale} which is essentially small, then the initial object of $\mathcal{C}$ can be constructed using $\mathcal{Z}$ (this was pointed out to us by the referees):

\begin{prop}
Let $\mathcal C$ be a category with products, and $\mathcal Z$ a subcategory that meets the conditions of Proposition \ref{contesto generale}. If $\mathcal Z$ is essentially small, then $\mathcal C$ has an initial object.
\end{prop}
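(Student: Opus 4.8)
The plan is to realize the initial object as the zero part of a single weakly initial product. The key observation is that condition~(ii) of Proposition~\ref{contesto generale} already supplies, for free, the uniqueness of morphisms out of any object of $\mathcal{Z}$; hence it suffices to produce a \emph{weakly} initial object that happens to lie in $\mathcal{Z}$.

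First I would use the essential smallness of $\mathcal{Z}$ to fix a genuine set $\{Z_i\}_{i \in I}$ of representatives of the isomorphism classes of objects of $\mathcal{Z}$, and form the product $P = \prod_{i \in I} Z_i$. This product exists because $\mathcal{C}$ has (small) products and $I$ is a set. I claim $P$ is weakly initial. Indeed, given any $A \in \mathcal{C}$, its zero part $\mathsf{Z}(A)$ lies in $\mathcal{Z}$, so $\mathsf{Z}(A) \cong Z_{i_0}$ for some index $i_0$; composing the projection $\pi_{i_0} \colon P \to Z_{i_0}$ with this isomorphism and then with $\varepsilon_A \colon \mathsf{Z}(A) \rightarrowtail A$ yields a morphism $P \to A$. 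Thus $\mathrm{Hom}(P, A) \neq \emptyset$ for every $A \in \mathcal{C}$.

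Next I would pass to the zero part $\varepsilon_P \colon \mathsf{Z}(P) \rightarrowtail P$ and show that $\mathsf{Z}(P)$ is initial. For existence of a morphism $\mathsf{Z}(P) \to A$, I precompose any morphism $P \to A$ (which exists by the previous paragraph) with $\varepsilon_P$. For uniqueness, since $\mathsf{Z}(P) \in \mathcal{Z}$, condition~(ii) of Proposition~\ref{contesto generale} guarantees that $\mathrm{Hom}(\mathsf{Z}(P), A)$ has at most one element. Combining the two, $\mathrm{Hom}(\mathsf{Z}(P), A)$ is a singleton for every $A$, which is exactly the statement that $\mathsf{Z}(P)$ is an initial object of $\mathcal{C}$.

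The argument mirrors the classical ``product of a solution set'' proof of the existence of an initial object, with $\{Z_i\}_{i \in I}$ playing the role of the solution set. The difference, and the point worth isolating, is that instead of refining $P$ by an equalizer to force uniqueness one simply passes to the subobject $\mathsf{Z}(P)$ and invokes condition~(ii), which delivers uniqueness at no cost. I expect the only genuinely delicate point to be set-theoretic, namely ensuring that $P$ is a legitimate small product; this is precisely what the essential smallness hypothesis on $\mathcal{Z}$ secures.
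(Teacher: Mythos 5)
Your proof is correct and takes essentially the same approach as the paper: both form the product $P$ of a set of representatives of $\mathcal{Z}$ and show that its zero part $\mathsf{Z}(P)$ is initial, with existence of morphisms coming from the projections together with the coreflection, and uniqueness coming from condition (ii) of Proposition \ref{contesto generale}. The only cosmetic difference is that you route the construction through the weak initiality of $P$, precomposing $P \to A$ with $\varepsilon_P$, while the paper writes the required morphism directly as $\varepsilon_X \circ \varphi \circ \mathsf{Z}(\pi_i)$; these composites coincide, since naturality of the counit gives $\mathsf{Z}(\pi_i) = \pi_i \varepsilon_P$.
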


\begin{proof}
Since $\mathcal Z$ is essentially small, there is a set $\{Z_i \in \mathcal Z \,|\, i\in I \}$ such that for every $Z \in \mathcal Z$ there exists an element $i \in I$ and an isomorphism $\varphi \colon Z_i \to Z$. We define $A \coloneqq \prod_{i \in I} Z_i$ (the product is computed in $\mathcal C$) and we prove that $\mathsf{Z}(A)$ is an initial object of $\mathcal C$. To show this, it suffices to prove that, for every $X \in \mathcal C$, there exists an arrow $\mathsf{Z}(A) \to X$ (the uniqueness is guaranteed by Proposition \ref{contesto generale}). Given such an object $X$, we have an isomorphism $\varphi \colon Z_i \to \mathsf{Z}(X)$, for an appropriate $i \in I$. Hence, we can consider the composite
\[ \xymatrix{ \mathsf{Z}(A) \ar[r]^-{\mathsf{Z}(\pi_i)} & Z_i \ar[r]^-{\varphi} & \mathsf{Z}(X) \ar[r]^-{\varepsilon_X} & X. } \]
\end{proof}

Regular pointed and quasi-pointed categories are examples of our situation. In these cases, the only quotient of $0$, up to isomorphisms, is $0$ itself, so the ideal $\mathcal{N}_{\mathcal{Z}}$ is the class of morphisms that factor through $0$. \\

We will be particularly interested in the case of regular categories in which the unique morphism $0 \to 1$ is a regular epimorphism and, again, $\mathcal{Z}$ is the subcategory whose objects are the regular quotients of $0$. In particular, every variety of universal algebras with at least one constant is an example of such category. Important non-pointed cases are:
\begin{itemize}
    \item the categories $\mathbf{Boole}$ of Boolean algebras, $\mathbf{Heyt}$ of Heyting algebras and $\mathbf{MV}$ of MV-algebras: in these cases the initial object is the $2$-element algebra, so the class $\mathcal{Z}$ is given just by $0$ and $1$;
    \item the category $\mathbf{Ring}$ of unitary rings: in this case the class $\mathcal{Z}$ is given by the collection of the quotients of $\mathbb{Z}$. Two unitary rings have the same zero part if and only if they have the same characteristic.
\end{itemize}

The concrete examples mentioned above are protomodular categories. Other examples of regular categories in which the morphism $0 \to 1$ is a regular epimorphism are the topological models of protomodular algebraic theories with at least one constant (e.g.\ topological unitary rings). Indeed, as shown in \cite{BorceuxClementino protomod top alg}, these categories are regular and protomodular; moreover, since the terminal object is still the singleton, the morphism $0 \to 1$ is a regular epimorphism.

Another interesting class of examples is given by the categories of the form $\op{\mathcal{E}}$, where $\mathcal{E}$ is an elementary topos; in this case the class $\mathcal{Z}$ is $\{ Z \in \op{\mathcal{E}} \, | \, \exists \, Z \rightarrowtail 1 \text{ in } \mathcal{E} \}$. In this situation, two objects have the same zero part if and only if they determine the same truth value in $\mathcal{E}$ (where an object $A$ in $\mathcal{E}$ determines the truth value given by the monomorphic part of the regular epi-mono factorization of $A \to 1$).

\section{$\mathcal{Z}$-kernels and $\mathcal{Z}$-cokernels}
Let $\mathcal{C}$ be a regular category equipped with a full subcategory $\mathcal{Z}$ satisfying the equivalent conditions of Proposition \ref{contesto generale}. Since the class $\mathcal{N}_{\mathcal{Z}}$ of morphisms that factor through $\mathcal{Z}$ is an ideal, we can consider kernels and cokernels with respect to it, as in \cite{Ehresmann2}:

\begin{defi}
Let $f \colon A \to B$ be a morphism in $\mathcal{C}$. We say that a morphism $k \colon K \to A$ in $\mathcal{C}$ is
a \emph{$\mathcal{Z}$-kernel} of $f$ if the following properties are satisfied:
\begin{itemize}
    \item[(a)] $fk \in \mathcal{N}_{\mathcal{Z}}$;
    \item[(b)] whenever $e: E \to A$ is a morphism in $\mathcal{C}$ and $fe \in \mathcal{N}_{\mathcal{Z}}$, then there exists a unique morphism $\varphi \colon E \to K$ in $\mathcal{C}$ such that $k \varphi = e$.
\end{itemize}
\end{defi}
The definition of a $\mathcal{Z}$-cokernel is dual. \\

It is immediate to see (see e.g.\ \cite{Grandis}) that every $\mathcal{Z}$-kernel is a monomorphism and that the $\mathcal{Z}$-kernel of any morphism $f \colon A \rightarrow B$ in $\mathcal{C}$, if it exists, is unique up to a unique isomorphism. This means that if $k \colon K \rightarrow A$ and $k' \colon K' \rightarrow A$ are $\mathcal{Z}$-kernels of the same arrow $f$, then there exists a unique isomorphism $\varphi \colon K' \rightarrow K$ such that $k \varphi = k'$. The duals of the previous observations hold for $\mathcal{Z}$-cokernels. \\

Actually, in our context $\mathcal{Z}$-kernels always exist, and they are obtained as pullbacks along the zero part of the codomain (this result should be compared with Proposition $2.3$ in \cite{GrandisJanMarki}, where the same fact is stated in a more restrictive context):

\begin{prop}
Under our assumptions, for every arrow $f \colon A \to B$ in $\mathcal{C}$ the $\mathcal{Z}$-kernel $k \colon K \to A$ of $f$ exists and it is given by the pullback
\[ \xymatrix{ K \ar[r]^{\chi} \ar[d]_k \pullback & \mathsf{Z}(B) \ar@{ m>->}[d]^{\varepsilon_B} \\
A \ar[r]_f & B. } \]
\end{prop}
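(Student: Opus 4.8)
The plan is to verify directly that the pullback object $K$ with the projection $k \colon K \to A$ satisfies the two defining properties of a $\mathcal{Z}$-kernel. The essential observation is that a morphism lands in $\mathcal{N}_{\mathcal{Z}}$ precisely when it factors through the zero part $\varepsilon_B \colon \mathsf{Z}(B) \rightarrowtail B$ of its codomain, since by the Remark every morphism in $\mathcal{N}_{\mathcal{Z}}$ admits a factorization through $\mathsf{Z}(B)$ via the monomorphism $\varepsilon_B$, and conversely any morphism factoring through $\varepsilon_B$ clearly lies in $\mathcal{N}_{\mathcal{Z}}$ because $\mathsf{Z}(B) \in \mathcal{Z}$.

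First I would check property (a): in the pullback square we have $fk = \varepsilon_B \chi$, and since $\mathsf{Z}(B) \in \mathcal{Z}$, this exhibits $fk$ as factoring through a trivial object, so $fk \in \mathcal{N}_{\mathcal{Z}}$. Next, for the universal property (b), I would take any $e \colon E \to A$ with $fe \in \mathcal{N}_{\mathcal{Z}}$. Using the Remark, I would write $fe = \varepsilon_B \psi$ for a unique $\psi \colon E \to \mathsf{Z}(B)$; uniqueness of $\psi$ follows because $\varepsilon_B$ is a monomorphism. Then the pair $(e, \psi)$ makes the outer square
\[ \xymatrix{ E \ar@/^/[drr]^{\psi} \ar@/_/[ddr]_{e} & & \\ & K \ar[r]^{\chi} \ar[d]_k \pullback & \mathsf{Z}(B) \ar@{ m>->}[d]^{\varepsilon_B} \\ & A \ar[r]_f & B } \]
commute, so the universal property of the pullback yields a unique $\varphi \colon E \to K$ with $k\varphi = e$ (and $\chi \varphi = \psi$).

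The one subtle point is the uniqueness of $\varphi$ as required by the $\mathcal{Z}$-kernel definition, which asks only that $k\varphi = e$, whereas the pullback guarantees uniqueness of the factorization through \emph{both} $k$ and $\chi$. Here I would use that $k$ is itself a monomorphism: since $\varepsilon_B$ is a monomorphism and monomorphisms are stable under pullback, $k$ is a monomorphism, so any $\varphi$ with $k\varphi = e$ is automatically unique. This is the step I expect to be the main obstacle to get cleanly, but it reduces immediately to pullback-stability of monomorphisms together with the factorization furnished by the Remark.

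Finally, I would note that the existence half of the statement is automatic once $\mathcal{C}$ admits the relevant pullback, which is guaranteed because $\mathcal{C}$ is assumed regular and hence finitely complete; thus the pullback along $\varepsilon_B$ always exists and produces the desired $\mathcal{Z}$-kernel.
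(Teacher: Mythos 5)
Your proof is correct and follows essentially the same route as the paper's: factor $fe$ through $\varepsilon_B$ using the Remark, invoke the universal property of the pullback to obtain $\varphi$, and deduce uniqueness of $\varphi$ from the fact that $k$ is a monomorphism, being the pullback of the monomorphism $\varepsilon_B$. The only additions (uniqueness of $\psi$ and the explicit remark that regularity of $\mathcal{C}$ guarantees the pullback exists) are harmless refinements of the same argument.
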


\begin{proof}
Clearly $fk \in \mathcal{N}_{\mathcal{Z}}$. Consider a morphism $l \colon L \to A$ such that $fl \in \mathcal{N}_{\mathcal{Z}}$. Then, there exists a morphism $\chi'$ making the following diagram commutative:
\[ \xymatrix{ L \ar[r]^l \ar[dr]_{\chi'} & A \ar[r]^f & B \\
& \mathsf{Z}(B). \ar@{ m>->}[ur]_{\varepsilon_B} } \]
Since $fl=\varepsilon_B \chi'$, thanks to the universal property of pullbacks there exists a unique morphism $\varphi$ such that $k \varphi=l$ and $\chi \varphi=\chi'$. Finally, consider a morphism $\theta$ such that $k \theta = l$; observing that $k$ is a monomorphism ($k$ is the pullback of the monomorphism $\varepsilon_B$), we conclude $\varphi = \theta$.
\end{proof}

The situation concerning cokernels is different. Indeed, in our context, $\mathcal{Z}$-cokernels do not always exist. To illustrate this, let us consider the category $\mathbf{Ring}$ of unitary rings. The identity morphism $1_{\mathbb{Z} \times \mathbb{Z}}$ does not have a $\mathcal{Z}$-cokernel. Indeed, let us suppose the existence of a $\mathcal{Z}$-cokernel $q \colon \mathbb{Z} \times \mathbb{Z} \rightarrow Q$ of $1_{\mathbb{Z} \times \mathbb{Z}}$. Therefore, considering the diagram below, there would be two morphisms $\varphi_1, \varphi_2 \colon Q \rightarrow \mathbb{Z}$ such that $\varphi_1 q= \pi_1$ and $\varphi_2 q = \pi_2$, where $\pi_1$ and $\pi_2$ are the product projections:
\[ \xymatrix{ & & \mathbb{Z} \\
\mathbb{Z} \times \mathbb{Z} \ar@/^1pc/[urr]^{\pi_1} \ar@/_1pc/[drr]_{\pi_2} \ar[r]^{1_{\mathbb{Z} \times \mathbb{Z}}} & \mathbb{Z} \times \mathbb{Z} \ar[r]^-q \ar[ur]^{\pi_1} \ar[dr]_{\pi_2} & Q \ar[u]_{\varphi_1} \ar[d]^{\varphi_2} \\
& & \mathbb{Z}. } \]
So, we would have $\langle \varphi_1, \varphi_2 \rangle q = 1_{\mathbb{Z} \times \mathbb{Z}}$, which would imply that $q$ is a split monomorphism. However, in general, a $\mathcal{Z}$-cokernel is an epimorphism, hence $q$ would be an isomorphism. Therefore, up to isomorphisms, we can assume $q=1_{\mathbb{Z} \times \mathbb{Z}}$. But, if $1_{\mathbb{Z} \times \mathbb{Z}}$ were the $\mathcal{Z}$-cokernel of $1_{\mathbb{Z} \times \mathbb{Z}}$, we would obtain a factorization of the form
\[ \xymatrix{ \mathbb{Z} \times \mathbb{Z} \ar[dr]_p \ar[rr]^{1_{\mathbb{Z} \times \mathbb{Z}} = \langle \pi_1, \pi_2 \rangle} & & \mathbb{Z} \times \mathbb{Z} \\
& Z, \ar[ur]_g } \]
where $Z$ is a quotient of $\mathbb{Z}$. Thanks to the existence of the arrow $g$, we observe that the characteristic of $Z$ is $0$, and so $Z=\mathbb{Z}$ (up to isomorphisms). Moreover, we get $g=\Delta$. To conclude, we observe $\langle p, p\rangle = \Delta p = \langle \pi_1 , \pi_2 \rangle$, which implies $p=\pi_1=\pi_2$, which is a contradiction. \\

Using the same example, we can exhibit a regular epimorphism which is not a $\mathcal{Z}$-cokernel, despite the fact that $\mathbf{Ring}$ is a protomodular category. Indeed, the first projection $\pi_1 \colon \mathbb{Z} \times \mathbb{Z} \to \mathbb{Z}$ is not a $\mathcal{Z}$-cokernel: if there was a morphism $f \colon C \to \mathbb{Z} \times \mathbb{Z}$ whose $\mathcal{Z}$-cokernel is $\pi_1$, then, observing that $\pi_2 f \in \mathcal{N}_{\mathcal{Z}}$ since the codomain of such morphism is in $\mathcal{Z}$, there would exist a unique morphism $\gamma \colon \mathbb{Z} \to \mathbb{Z}$ such that $\gamma \pi_1 = \pi_2$, but this is impossible. \\

Going back to $\mathcal{Z}$-kernels, we can observe that a classical property of pointed categories, namely that parallel arrows in a pullback diagram have isomorphic kernels, does not hold in our context. A simple counterexample, in $\mathbf{Ring}$, is the following:
\[ \xymatrix{ \mathbb{Z} \ar@{=}[r] \ar[d] \pullback & \mathbb{Z} \ar[d] \\
\mathbb{Z}/2 \ar@{=}[r] & \mathbb{Z}/2, } \]
where the vertical morphisms are the canonical projections. However, we can recover the same kind of result when the morphism on the right is inverted by $\mathsf{Z}$:

\begin{prop}
Consider a pullback
\[ \xymatrix{ A \ar[r]^f \ar[d]_h \pullback & B \ar[d]^l \\
C \ar[r]_g & D, } \]
where $\mathsf{Z}(l)$ is an isomorphism. Then, $\Zker(f)$ and $\Zker(g)$ are isomorphic.
\end{prop}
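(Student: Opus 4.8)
The plan is to realize both $\mathcal{Z}$-kernels as iterated pullbacks along the counit $\varepsilon$, and then to use the hypothesis that $\mathsf{Z}(l)$ is an isomorphism to identify them. Recall from the preceding proposition characterizing $\mathcal{Z}$-kernels as pullbacks that $\Zker(f)$ is the pullback of $f$ along $\varepsilon_B \colon \mathsf{Z}(B) \rightarrowtail B$; write $k_f \colon K_f \to A$ and $\chi_f \colon K_f \to \mathsf{Z}(B)$ for its two legs, and similarly $k_g \colon K_g \to C$, $\chi_g \colon K_g \to \mathsf{Z}(D)$ for the legs of $\Zker(g)$, the pullback of $g$ along $\varepsilon_D$.

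First I would stack the $\mathcal{Z}$-kernel square of $f$ on top of the given pullback square, obtaining
\[ \xymatrix{ K_f \ar[r]^{\chi_f} \ar[d]_{k_f} & \mathsf{Z}(B) \ar[d]^{\varepsilon_B} \\ A \ar[r]_f \ar[d]_h & B \ar[d]^l \\ C \ar[r]_g & D. } \]
Since the top square (the $\mathcal{Z}$-kernel) and the bottom square (our hypothesis) are both pullbacks, the pullback pasting lemma shows that the outer rectangle is a pullback. Hence $K_f$ is the pullback of $g$ along the composite $l\varepsilon_B \colon \mathsf{Z}(B) \to D$, with legs $hk_f$ and $\chi_f$.

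Next I would rewrite this leg using naturality of the counit: for $l \colon B \to D$ one has $l\varepsilon_B = \varepsilon_D\,\mathsf{Z}(l)$. Thus $K_f$ is the pullback of $g$ along $\varepsilon_D\,\mathsf{Z}(l)$, while $K_g$ is the pullback of $g$ along $\varepsilon_D$. The final step, which is exactly where the assumption on $\mathsf{Z}(l)$ is used, is to observe that these two pullbacks are isomorphic because $\mathsf{Z}(l)$ is an isomorphism. Concretely, from $g k_g = \varepsilon_D \chi_g = (\varepsilon_D\,\mathsf{Z}(l))\bigl(\mathsf{Z}(l)^{-1}\chi_g\bigr)$ the universal property of $K_f$ yields a unique $v \colon K_g \to K_f$ with $hk_f\,v = k_g$ and $\chi_f\,v = \mathsf{Z}(l)^{-1}\chi_g$; symmetrically, from $g(hk_f) = (\varepsilon_D\,\mathsf{Z}(l))\chi_f = \varepsilon_D\bigl(\mathsf{Z}(l)\chi_f\bigr)$ the universal property of $K_g$ yields a unique $u \colon K_f \to K_g$ with $k_g\,u = hk_f$ and $\chi_g\,u = \mathsf{Z}(l)\chi_f$. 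A direct check against the defining equations shows $k_g(uv)=k_g$, $\chi_g(uv)=\chi_g$, and dually for $vu$, so uniqueness in the two universal properties forces $uv = \mathrm{id}_{K_g}$ and $vu = \mathrm{id}_{K_f}$. Therefore $\Zker(f) = K_f \cong K_g = \Zker(g)$.

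I do not expect a genuine obstacle: the argument is a pasting of pullbacks followed by a naturality rewriting. The only point requiring care is the last step, and it is precisely there that the hypothesis is indispensable—without it, $\varepsilon_D\,\mathsf{Z}(l)$ is merely a monomorphism factoring through a possibly smaller zero object, and the two pullbacks genuinely differ, as the preceding counterexample with $\mathbb{Z} \to \mathbb{Z}/2$ illustrates.
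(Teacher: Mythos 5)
Your proof is correct and is essentially the argument the paper gives: the paper organizes the same data into a commutative cube (the two $\mathcal{Z}$-kernel squares, the given pullback, and the naturality square $l\varepsilon_B = \varepsilon_D\,\mathsf{Z}(l)$), deduces that the back face is a pullback, and concludes that the comparison map is an isomorphism as a pullback of the isomorphism $\mathsf{Z}(l)$. Your only deviation is cosmetic: where the paper invokes pullback cancellation and the fact that a pullback of an isomorphism is an isomorphism, you verify the mutually inverse mediating maps by hand.
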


\begin{proof}
Consider the following commutative cube:
\begin{equation}\label{cube kernels}
\xymatrix{ K(f) \ar[dd]_{\exists ! \varphi} \ar[dr]^k \ar[rr]^{\chi} & & \mathsf{Z}(B) \ar[dr]^{\varepsilon_B} \ar[dd]^(.7){\mathsf{Z}(l)} & \\
& A \ar[dd]_(.3)h \ar[rr]^(.3)f & & B \ar[dd]^l \\
K(g) \ar[dr]_n \ar[rr]^(.3){\chi'} & & \mathsf{Z}(D) \ar[dr]^{\varepsilon_D} & \\
& C \ar[rr]_g & & D, }
\end{equation}
where $k=\Zker(f)$, $n=\Zker(g)$, and $\varphi$ is induced by the universal property of the pullback. By assumption, the top face, the front face and the bottom face are pullbacks. Then the back face is a pullback. Therefore, $\varphi$ is an isomorphism, as pullback of the isomorphism $\mathsf{Z}(l)$.
\end{proof}

If, moreover, $\mathcal{C}$ is a protomodular category, a partial converse holds:

\begin{prop}
Consider a commutative diagram
\[ \xymatrix{ A \ar@{->>}[r]^f \ar[d]_h & B \ar[d]^l \\
C \ar[r]_g & D, } \]
where $f$ is a regular epimorphism, $\mathsf{Z}(l)$ is an isomorphism, and $\Zker(g)$ is isomorphic to $\Zker(f)$. Then the square is a pullback.
\end{prop}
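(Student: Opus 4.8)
The plan is to build the pullback of $g$ and $l$ and to reduce everything to showing that the comparison morphism into it is an isomorphism. Let $P$ be the pullback of $C \xrightarrow{g} D \xleftarrow{l} B$, with projections $f' \colon P \to B$ and $h' \colon P \to C$, and let $u \colon A \to P$ be the unique morphism satisfying $f'u = f$ and $h'u = h$. The given square is a pullback exactly when $u$ is an isomorphism, so this is what I aim to prove. Two easy observations come first: since $f = f'u$ is a regular epimorphism, its right-hand factor $f'$ is a (strong, hence) regular epimorphism; and, applying the previous proposition to the pullback square defining $P$ — whose right vertical map $l$ satisfies $\mathsf{Z}(l)$ an isomorphism — the induced comparison $\psi \colon \Zker(f') \to \Zker(g)$ is an isomorphism.

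Next I would pin down how the three relevant $\mathcal{Z}$-kernels fit together. Writing $k = \Zker(f)$, $k' = \Zker(f')$ and $n = \Zker(g)$, the commutativity $lf = gh$ gives $g(hk) = l(fk) \in \mathcal{N}_{\mathcal{Z}}$, so $hk$ factors through $n$, yielding the canonical comparison $\bar\varphi \colon \Zker(f) \to \Zker(g)$, which is an isomorphism by hypothesis; likewise $u$ induces $\kappa \colon \Zker(f) \to \Zker(f')$ with $k'\kappa = uk$. Since each of these comparisons is uniquely determined by compatibility with the monomorphism $n$, one gets $\psi\kappa = \bar\varphi$, so $\kappa = \psi^{-1}\bar\varphi$ is an isomorphism. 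Because $\mathcal{Z}$-kernels are computed as pullbacks along $\varepsilon_B$, the square with horizontal maps $\kappa$, $u$ and vertical maps $k'$, $k$ is moreover a pullback.

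It then remains to prove that $u$ is an isomorphism, knowing only that $f$ and $f'$ are regular epimorphisms with $f = f'u$ and that the induced map $\kappa$ on $\mathcal{Z}$-kernels is an isomorphism; this is the core of the argument and the step that genuinely uses protomodularity. I would show separately that $u$ is a monomorphism and a regular epimorphism. For the latter, factor $u = me$ as a regular epimorphism $e$ followed by a monomorphism $m \colon I \rightarrowtail P$; then $f'm$ is again a regular epimorphism, and from $k'\kappa = uk = mek$ with $\kappa$ invertible one sees that $k'$ factors through $m$, i.e.\ the subobject $I$ contains $\Zker(f')$ while $f'$ restricted to $I$ still covers $B$. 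Protomodularity forces such a subobject to be the whole of $P$, so $m$ is an isomorphism and $u$ is a regular epimorphism. For injectivity, one uses that $\Zker(u)$ factors through $\Zker(f)$ and that $uk = k'\kappa$ is a monomorphism to conclude, again via protomodularity, that $\Zker(u)$ is trivial and hence that $u$ is a monomorphism. Being a monomorphism and a regular epimorphism, $u$ is an isomorphism.

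The main obstacle is precisely this final step, which is a form of the short five lemma relative to the ideal $\mathcal{N}_{\mathcal{Z}}$. In the classical pointed case the two protomodular inputs — that a subobject containing the kernel of a regular epimorphism and mapping onto its codomain is everything, and that a trivial kernel forces a monomorphism — are standard consequences of protomodularity; here they must be established in the non-pointed setting, where the zero parts are non-trivial and, crucially, one cannot appeal to the slogan ``$f'$ is the cokernel of its kernel'' since $\mathcal{Z}$-cokernels need not exist. Making these two facts precise with respect to $\mathcal{N}_{\mathcal{Z}}$, likely via kernel pairs and the Mal'cev/protomodular behaviour of equivalence relations, is where the real work lies.
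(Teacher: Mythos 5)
Your reduction is sound as far as it goes: forming the pullback $P$ of $g$ and $l$ with projections $f', h'$, noting that $f'$ is a regular epimorphism (being the second factor of one), that the comparison $\psi \colon \Zker(f') \to \Zker(g)$ is invertible by the preceding proposition, that $\kappa = \psi^{-1}\bar\varphi \colon \Zker(f) \to \Zker(f')$ is therefore invertible, and that the square formed by $k$, $\kappa$, $u$, $k'$ is a pullback --- all of this is correct. The problem is that the step carrying all the mathematical content, namely that the comparison $u \colon A \to P$ is an isomorphism, is exactly what you do not prove: what you have at that point is a morphism of short $\mathcal{Z}$-exact sequences $(\kappa, u, 1_B)$ with $\kappa$ and $1_B$ isomorphisms, i.e.\ precisely an instance of the short five lemma (Proposition \ref{prop short five lemma}), and in this paper that lemma is proved \emph{after} the present proposition, by the very same cube-and-protomodularity argument that proves the present proposition directly. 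So your reduction does not avoid the real work; it postpones all of it to the ``remaining step'' that you explicitly leave open.

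Moreover, the sketch you give for that remaining step contains a claim that is false in this setting. For the monomorphism half you propose to show that $\Zker(u)$ is trivial and ``hence that $u$ is a monomorphism''; but in this non-pointed context a trivial $\mathcal{Z}$-kernel does not imply monomorphism: the paper's own counterexample is $p_n \colon \mathbb{Z} \to \mathbb{Z}/n$ in $\mathbf{Ring}$, which satisfies $\Zker(p_n) = 1_{\mathbb{Z}}$ yet is not a monomorphism. That implication is available only for morphisms inverted by $\mathsf{Z}$, and you have not shown that $\mathsf{Z}(u)$ is an isomorphism. (The regular-epimorphism half has an analogous issue: ``a subobject containing the $\mathcal{Z}$-kernel and covering the codomain is everything'' is itself a short-five-lemma-type statement that would have to be re-established relative to $\mathcal{N}_{\mathcal{Z}}$, which you acknowledge but do not do.) For comparison, the paper's proof never forms $P$: it takes the cube \eqref{cube kernels}, observes that the top and bottom faces (the defining pullbacks of $\Zker(f)$ and $\Zker(g)$) and the back face (both of whose vertical arrows, $\varphi$ and $\mathsf{Z}(l)$, are isomorphisms) are pullbacks, pastes these to conclude that the rectangle through $\mathsf{Z}(B)$ is a pullback, and then applies the protomodular cancellation property --- if (1) and (1)+(2) are pullbacks and the middle vertical arrow is a regular epimorphism, then (2) is a pullback --- to conclude in one stroke.
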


\begin{proof}
Consider the commutative cube \eqref{cube kernels}, where again $k=\Zker(f)$, $n=\Zker(g)$, and $\varphi$
is induced by the universal property of the pullback. By assumption, the top face, the bottom face, and the back face (since $\varphi$ and $\mathsf{Z}(l)$ are isomorphisms) are pullbacks. Therefore, we obtain the commutative diagram
\[ \xymatrix{ K(f) \ar@{ m>->}[r]^k \ar[d]_{\chi} \ar@{}[dr]|{(1)} & A \ar[r]^h \ar@{->>}[d]_f \ar@{}[dr]|{(2)} & C \ar[d]^g \\
\mathsf{Z}(B) \ar@{ m>->}[r]_{\varepsilon_B} & B \ar[r]_l & D } \]
where both (1) and (1)+(2) are pullbacks and $f$ is a regular epimorphism. By protomodularity, we conclude that (2) is a pullback, too.
\end{proof}

It is well known that, in a pointed, regular, protomodular category, a morphism $f$ is a monomorphism if and only if its kernel is $0$. In our non-pointed case, however, the analogue of this result does not always hold. For instance, consider the canonical projection $p_n \colon \mathbb{Z} \to \mathbb{Z}/n$ in $\mathbf{Ring}$. It is immediate to observe that $\Zker(p_n)=1_{\mathbb{Z}}$, yet $p_n$ is not a monomorphism. However, if we consider a morphism $f \colon A \to B$ inverted by the functor $\mathsf{Z}$, then it is true that $f$ is a monomorphism if and only if $\Zker(f)=\varepsilon_A$. In fact, if the diagram
\[ \xymatrix{ K(f) \ \ar@{->>}[r]^{\chi} \ar@{ m>->}[d]_k  & \mathsf{Z}(B) \ar@{ m>->}[d]^{\varepsilon_B} \\
A \ar@{ m>->}[r]_f & B } \]
is a pullback, then $\chi$ is an isomorphism ($\chi$ is a strong epimorphism being an arrow with trivial codomain, and it is a monomorphism as pullback of a monomorphism), and so $k$ is isomorphic to $\varepsilon_A$. Conversely, if $\varepsilon_A=\Zker(f)$ the diagram
\[ \xymatrix{ \mathsf{Z}(A) \ \ar[r]^{\mathsf{Z}(f)}_{\sim} \ar@{ m>->}[d]_{\varepsilon_A}  & \mathsf{Z}(B) \ar@{ m>->}[d]^{\varepsilon_B} \\
A \ar[r]_f & B } \]
is a pullback, and so, by protomodularity, $f$ is a monomorphism since its pullback $\mathsf{Z}(f)$ is a monomorphism.

\section{Homological lemmas}

Now we are ready to prove our non-pointed version of the classical homological lemmas (the proofs will be adaptations of the ones known for homological categories, that can be found e.g.\ in \cite{Bbbook}). To do that, we first have to define short exact sequences in our context. Let $\mathcal{C}$ be a regular protomodular category equipped with a full subcategory $\mathcal{Z}$ satisfying the equivalent conditions of Proposition \ref{contesto generale}.

\begin{defi}
In the category $\mathcal{C}$, the sequence
\[ \xymatrix{ A \ar[r]^k & B \ar[r]^f & C } \]
of objects and morphisms is said to be \emph{a short $\mathcal{Z}$-exact sequence} if $f$ is a regular epimorphism and $k=\Zker(f)$.
\end{defi}

As we observed in the previous section, this request does not imply that $f$ is the $\mathcal{Z}$-cokernel of $k$. This is the main difference between our context on one side, and, on the other side, the classical homological one, the quasi-pointed one \cite{Bourn 3x3}, and the non-pointed ones considered in \cite{Grandis, GrandisJanMarki, GJR 3x3 lemma}. \\

Let us begin with the short five lemma:

\begin{prop} \label{prop short five lemma}
Consider a commutative diagram
\[ \xymatrix{ K \ar[r]^k \ar[d]_u & A \ar[r]^f \ar[d]^a & B \ar[d]^b \\
K' \ar[r]_{k'} & A' \ar[r]_{f'} & B' } \]
in which the rows are short $\mathcal{Z}$-exact sequences. If $u$ and $b$ are isomorphisms, then $a$ is an isomorphism, too.
\end{prop}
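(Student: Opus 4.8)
The plan is to reduce the statement to a single pullback verification and then to invoke the partial converse to the ``parallel kernels'' proposition proved just above. The key observation is that if I can show that the right-hand square
\[ \xymatrix{ A \ar[r]^f \ar[d]_a & B \ar[d]^b \\ A' \ar[r]_{f'} & B' } \]
is a pullback, then I am done: in a pullback square the arrow $a$ is the pullback of the arrow $b$ along $f'$, and since $b$ is an isomorphism, its pullback $a$ is an isomorphism as well. Note that regularity and protomodularity are available throughout, since $\mathcal{C}$ is assumed to be a regular protomodular category.

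To show that the right-hand square is a pullback, I would apply the preceding proposition (the partial converse): it suffices to check that $f$ is a regular epimorphism, that $\mathsf{Z}(b)$ is an isomorphism, and that $\Zker(f)$ and $\Zker(f')$ are isomorphic via the canonically induced comparison map. The first condition holds because the top row is a short $\mathcal{Z}$-exact sequence, and the second holds because $b$ is an isomorphism and $\mathsf{Z}$, being a functor, preserves isomorphisms. Thus the only point that genuinely requires an argument is the comparison of the two $\mathcal{Z}$-kernels.

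For the kernel comparison, I would set up the commutative cube \eqref{cube kernels} attached to the right-hand square, whose top and bottom faces are the pullbacks computing $k = \Zker(f)$ and $k' = \Zker(f')$, and I would let $\varphi \colon K \to K'$ be the morphism induced by the universal property of the bottom pullback. The heart of the argument is to identify $\varphi$ with the given isomorphism $u$. Since the left-hand square of the original diagram commutes we have $k' u = a k = k' \varphi$, and because $k'$ is a $\mathcal{Z}$-kernel it is a monomorphism; cancelling $k'$ yields $\varphi = u$. As $u$ is an isomorphism by hypothesis, so is $\varphi$, which is exactly what is needed to apply the partial converse and conclude that the right-hand square is a pullback.

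I expect the main (though modest) obstacle to be precisely this bookkeeping that forces $\varphi = u$: one must be careful that the comparison map produced by the universal property coincides with the vertical arrow $u$ appearing in the hypothesis, rather than being merely some abstract isomorphism between the two kernel objects. The monomorphy of $k'$, which is a general feature of $\mathcal{Z}$-kernels, is what makes this identification automatic, and it is the one place where the specific structure of $\mathcal{Z}$-kernels (and not just their existence) enters the proof. Once $\varphi = u$ is established, the statement follows formally from the two propositions on $\mathcal{Z}$-kernels and pullbacks proved earlier.
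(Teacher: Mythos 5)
Your proof is correct and is essentially the paper's own argument: the partial-converse proposition you invoke is proved in the paper by exactly the cube-plus-protomodularity computation that the paper simply inlines in its proof of the short five lemma, and your identification $\varphi = u$ (by cancelling the monomorphism $k'$) is precisely the point the paper handles implicitly when it asserts that the back face of its cube is a pullback. You have merely factored the same argument through the previously proved proposition instead of repeating it, which is a perfectly valid (and arguably cleaner) organization.
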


\begin{proof}
Consider the following commutative diagram:
\[ \xymatrix{ K \ar[dd]_u \ar[dr]^k \ar[rr] & & \mathsf{Z}(B) \ar[dr] \ar[dd]^(.7){\mathsf{Z}(b)} & \\
& A \ar[dd]_(.3)a \ar[rr]^(.3)f & & B \ar[dd]^b \\
K' \ar[dr]_{k'} \ar[rr] & & \mathsf{Z}(B') \ar[dr] & \\
& A' \ar[rr]_{f'} & & B', } \]
where, by assumption, the top face, the bottom face, and the back face are pullbacks. Then, the diagram defined by the top face and the front face is a pullback. Therefore, we obtain the commutative diagram
\[ \xymatrix{ K \ar[r]^k \ar[d] \ar@{}[dr]|{(1)} & A \ar[r]^a \ar@{->>}[d]^f \ar@{}[dr]|{(2)} & A' \ar[d]^{f'} \\
\mathsf{Z}(B) \ar[r]_{\varepsilon_B} & B \ar[r]_b & B',} \]
where both (1) and (1)+(2) are pullbacks and $f$ is a regular epimorphism. By protomodularity, we conclude that (2) is a pullback, too. We deduce that $a$ is an isomorphism, as pullback of the isomorphism $b$.
\end{proof}

In order to split the lemma in the versions for monomorphisms and for regular epimorphisms, we need the following well-known lemma, which holds in every regular category:

\begin{lem} \label{lemma pb reg epi}
If, in the following diagram, \emph{(1)} and \emph{(2)} are commutative squares, such that \emph{(1)} and \emph{(1)+(2)} are pullbacks and $f'$ is a regular epimorphism, then \emph{(2)} is a pullback:
\[ \xymatrix{ A \ar[r]^f \ar[d]_a \ar@{}[dr]|{(1)} & B \ar[r]^g \ar[d]^b \ar@{}[dr]|{(2)} & C \ar[d]^c \\
A' \ar@{->>}[r]_{f'} & B' \ar[r]_{g'} & C'.} \]
\end{lem}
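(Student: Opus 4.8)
The plan is to verify the universal property of square (2) through its comparison morphism. I would first form the pullback $P \coloneqq B' \times_{C'} C$ of the cospan $B' \xrightarrow{g'} C' \xleftarrow{c} C$, with projections $p \colon P \to B'$ and $q \colon P \to C$. Since square (2) commutes, we have $g'b = cg$, so the universal property of $P$ yields a unique comparison morphism $t = \langle b, g \rangle \colon B \to P$ with $pt = b$ and $qt = g$. The whole statement then reduces to showing that $t$ is an isomorphism, and I would prove this by establishing separately that $t$ is a regular epimorphism and a monomorphism, using that a morphism which is both is automatically an isomorphism.

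Next I would exploit the pasting law for pullbacks. The outer rectangle (1)+(2) exhibits $A$ as the pullback of $A' \xrightarrow{g'f'} C' \xleftarrow{c} C$, and this pullback decomposes as: first pull back $c$ along $g'$ to obtain $P$, then pull back $p$ along $f'$ to obtain $Q \coloneqq A' \times_{B'} P$, with projections $\alpha \colon Q \to A'$ and $\beta \colon Q \to P$ satisfying $f'\alpha = p\beta$. The pasting law gives a canonical isomorphism $\sigma \colon A \xrightarrow{\sim} Q$ with $\alpha\sigma = a$ and $\beta\sigma = tf$. Two regular-epimorphism facts follow immediately from pullback-stability of regular epimorphisms: the map $f$ is a regular epimorphism (it is the pullback of $f'$ along $b$, by square (1)), and $\beta$ is a regular epimorphism (it is the pullback of $f'$ along $p$). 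Moreover, since $b = pt$, a further application of the pasting law shows that the commutative square with top edge $\sigma$, left edge $f$, right edge $\beta$ and bottom edge $t$ exhibits $A$ as the pullback $Q \times_P B$; that is, $\sigma$ is precisely the pullback of $t$ along the regular epimorphism $\beta$.

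It now remains to deduce that $t$ is an isomorphism from the facts that its pullback $\sigma$ along $\beta$ is an isomorphism and that $\beta$ is a regular epimorphism. For the epimorphism part, from $\beta\sigma = tf$ and $\sigma$ invertible I get $\beta = t\,(f\sigma^{-1})$; since $\beta$ is a strong epimorphism and strong epimorphisms admit right cancellation (if $t\,u$ is a strong epimorphism then so is $t$), I conclude that $t$ is a strong, hence regular, epimorphism. The monomorphism part is the main obstacle: I would obtain it from the \emph{reflection of monomorphisms along regular epimorphisms}, which holds in every regular category because (regular epimorphism, monomorphism) factorizations are stable under pullback. Concretely, writing $t = m\,t_0$ for its image factorization, pulling back along $\beta$ preserves this factorization; as $\sigma = \beta^*(t)$ is an isomorphism, its regular-epimorphism part $\beta^*(t_0)$ must be an isomorphism, and since $\beta^*(t_0)$ is the pullback of the regular epimorphism $t_0$ along a regular epimorphism, one concludes $t_0$ is an isomorphism, whence $t$ is a monomorphism. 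Combining the two parts, $t$ is an isomorphism and square (2) is a pullback.

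The delicate point is exactly this reflection property of regular epimorphisms, and there are two clean ways to discharge it: either invoke the pullback-stability of the regular image factorization as above (the standard structural route), or appeal to the metatheorem for regular categories, which legitimizes element-wise reasoning for statements phrased in terms of finite limits and regular epimorphisms. Under the latter, the entire lemma reduces to a short diagram chase in $\mathbf{Set}$: injectivity of $t$ uses the surjectivity of $f'$ together with the outer pullback to separate two elements of $B$, and surjectivity of $t$ uses the surjectivity of $f'$ to lift any element of $P$ back through the outer pullback to an element of $B$. I expect the structural argument to be the one matching the spirit of the paper, with the reflection-of-monomorphisms step being where all the regularity is genuinely used.
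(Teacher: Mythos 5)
First, a point of comparison: the paper does not prove this lemma at all --- it is invoked as a ``well-known'' fact holding in every regular category, with no argument given --- so your proposal can only be judged on its own terms. Most of it holds up. The reduction to the comparison morphism $t = \langle b, g\rangle \colon B \to P = B'\times_{C'}C$, the identification (via two correct applications of the pasting law) of the canonical isomorphism $\sigma \colon A \to Q$ as the pullback of $t$ along the regular epimorphism $\beta$, and the epimorphism half of the conclusion are all correct: from $\beta = t(f\sigma^{-1})$ and right cancellation for strong epimorphisms, $t$ is a strong epimorphism, and a strong epimorphism which is also a monomorphism is an isomorphism. So you have correctly reduced the lemma to the statement that pullback along a regular epimorphism reflects isomorphisms.

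The gap is in the monomorphism half. Your final inference --- ``$\beta^*(t_0)$ is an isomorphism and is the pullback of the regular epimorphism $t_0$ along a regular epimorphism, hence $t_0$ is an isomorphism'' --- is not a consequence of the pullback-stability of (regular epi, mono) factorizations; it \emph{is} the reflection property you set out to prove, merely specialized to regular epimorphisms (for a regular epimorphism, being an isomorphism is equivalent to being a monomorphism). As written, the structural route is circular. The missing ingredient is a kernel-pair argument, which proves directly that pullback along a regular epimorphism $e$ reflects monomorphisms for an \emph{arbitrary} morphism $v$: the kernel pair of $e^*(v)$ is canonically a pullback of the kernel pair of $v$, and the comparison morphism between the two kernel pairs is itself a pullback of $e$, hence an epimorphism; if $e^*(v)$ is monic its kernel-pair projections coincide, and composing with that epimorphism forces the kernel-pair projections of $v$ to coincide, so $v$ is monic. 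With this lemma in hand the image-factorization detour is unnecessary: $\sigma$ monic gives $t$ monic, and your epimorphism half finishes the proof. Your fallback via Barr's metatheorem for regular categories is a legitimate alternative, and the chase you sketch in $\mathbf{Set}$ is correct; note, though, that it proves the lemma directly from the original diagram, so if you invoke it the entire structural reduction becomes superfluous rather than complementary.
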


\begin{lem} \label{lemma pb iff mono}
Consider a commutative diagram
\[ \xymatrix{ K \ar[r]^k \ar[d]_u \ar@{}[dr]|{(1)} & A \ar[r]^f \ar[d]^a & B \ar[d]^b \\
K' \ar[r]_{k'} & A' \ar[r]_{f'} & B'  } \]
in which the top row is $\mathcal{Z}$-exact, $k' = \Zker(f')$, and $\mathsf{Z}(B) \cong \mathsf{Z}(B')$. Then the square (1) is a pullback if and only if $b$ is a monomorphism.
\end{lem}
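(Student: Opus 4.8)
The plan is to run the whole equivalence through the naturality square of the counit $\varepsilon$ at $b$. Denote by $(\star)$ the commutative square
\[ \xymatrix{ \mathsf{Z}(B) \ar[r]^{\mathsf{Z}(b)} \ar[d]_{\varepsilon_B} & \mathsf{Z}(B') \ar[d]^{\varepsilon_{B'}} \\ B \ar[r]_b & B'. } \]
Since $\mathsf{Z}(B) \cong \mathsf{Z}(B')$ and $\mathcal{Z}$ is posetal, the unique arrow $\mathsf{Z}(b)$ is an isomorphism, so $b$ is inverted by $\mathsf{Z}$. I would also record the two pullbacks expressing the $\mathcal{Z}$-kernels: the square with sides $\chi,\varepsilon_B,k,f$ and the one with sides $\chi',\varepsilon_{B'},k',f'$ are pullbacks because $k=\Zker(f)$ and $k'=\Zker(f')$. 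The statement then follows from the chain: square $(1)$ is a pullback $\iff$ $(\star)$ is a pullback $\iff$ $b$ is a monomorphism.

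For the second equivalence, recall that by definition $\Zker(b)$ is the pullback of $\varepsilon_{B'}$ along $b$. Hence $(\star)$ is a pullback precisely when the comparison $\mathsf{Z}(B)\to\Zker(b)$ is an isomorphism, that is, when $\Zker(b)=\varepsilon_B$. As $b$ is inverted by $\mathsf{Z}$, the characterization of monomorphisms among $\mathsf{Z}$-inverted maps established at the end of the previous section says exactly that $\Zker(b)=\varepsilon_B$ if and only if $b$ is a monomorphism.

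The first equivalence is a pasting argument. Pasting the top $\mathcal{Z}$-kernel pullback with $(\star)$ produces a $1\times 2$ diagram whose outer rectangle is the pullback of $\varepsilon_{B'}$ along $bf$; pasting square $(1)$ with the bottom $\mathcal{Z}$-kernel pullback produces a $1\times 2$ diagram whose outer rectangle is the pullback of $\varepsilon_{B'}$ along $f'a$. Because $bf=f'a$ and (using that $\varepsilon_{B'}$ is monic) $\chi'u=\mathsf{Z}(b)\chi$, these two outer rectangles coincide; call the common rectangle $(\mathrm{Out})$. In the second diagram the right-hand square (the bottom $\mathcal{Z}$-kernel) is a pullback, so by the ordinary pasting lemma square $(1)$ is a pullback \emph{if and only if} $(\mathrm{Out})$ is. In the first diagram the left-hand square (the top $\mathcal{Z}$-kernel) is a pullback, so if $(\star)$ is a pullback then $(\mathrm{Out})$ is as well; combined with the previous sentence this already gives the implication $b$ monomorphism $\Rightarrow$ square $(1)$ a pullback.

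The converse implication, that square $(1)$ being a pullback forces $(\star)$ to be one, is the only step where the general pasting lemma is insufficient, and it is the main obstacle. Here I would invoke Lemma~\ref{lemma pb reg epi}: in the first diagram the left-hand square and the outer rectangle $(\mathrm{Out})$ are pullbacks (the latter because $(1)$ is a pullback), and the lower-left horizontal map is $f$, which is a regular epimorphism since the top row is a short $\mathcal{Z}$-exact sequence; that lemma then forces $(\star)$ to be a pullback, whence $b$ is a monomorphism by the second paragraph. This single appeal to regularity through Lemma~\ref{lemma pb reg epi} is the genuinely non-formal point at which the hypotheses enter; everything else reduces to the posetality of $\mathcal{Z}$, the pullback description of $\Zker$, and routine pasting.
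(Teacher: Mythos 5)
Your proposal is correct and follows essentially the same route as the paper: the paper organizes the same squares into a commutative cube, obtains the pullback of the composite (top $\mathcal{Z}$-kernel)$+$(naturality square of $b$) by pasting, applies Lemma~\ref{lemma pb reg epi} with the regular epimorphism $f$ in exactly the spot you do, and handles the passage between ``naturality square is a pullback'' and ``$b$ is a monomorphism'' by the same two arguments (protomodularity reflecting monomorphisms one way, the strong-epi-into-a-trivial-object argument the other) that underlie the end-of-Section-4 characterization you cite. The only difference is presentational: you flatten the cube into two $1\times 2$ rectangles with the intermediate square $(\mathrm{Out})$, which is a faithful reformulation of the paper's pasting argument.
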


\begin{proof}
We consider the commutative cube
\[ \xymatrix{ K \ar[dd]_u \ar[dr] \ar[rr]^k & & A \ar[dr]^f \ar[dd]^(.7)a & \\
& \mathsf{Z}(B) \ar[dd]^(.3){\simeq} \ar[rr] & & B \ar[dd]^b \\
K' \ar[dr] \ar[rr]^(.3){k'} & & A' \ar[dr]^{f'} & \\
& \mathsf{Z}(B') \ar[rr] & & B', } \]
where the top face and the bottom face are pullbacks. If (1) is a pullback, then the diagram formed by the top face and the front face is a pullback, too. Therefore, in the commutative diagram
\[ \xymatrix{ K \ar[r] \ar[d]_k \ar@{}[dr]|{(3)} & \mathsf{Z}(B) \ar[r]^{\simeq} \ar[d] \ar@{}[dr]|{(4)} & \mathsf{Z}(B') \ar[d] \\
A \ar@{->>}[r]_{f} & B \ar[r]_b & B'} \]
we have that (3)+(4) is a pullback, (3) is a pullback and $f$ is a regular epimorphism. Then, thanks to Lemma \ref{lemma pb reg epi}, we obtain that (4) is a pullback, as well. Since in a protomodular category pullbacks reflect monomorphisms, we can conclude that $b$ is a monomorphism.\\
Conversely, if we assume that $b$ is a monomorphism, we can deduce that the front face in the cube above is a pullback. Therefore, the diagram defined by the back face and  the bottom face forms a pullback. Since the bottom face is already a pullback, we can conclude that the back face is a pullback, too. In other words, (1) is a pullback.
\end{proof}

We observe that the assumption $\mathsf{Z}(B) \cong \mathsf{Z}(B')$ in the previous lemma is essential. This can be seen considering the commutative diagram below, in the category of unitary rings:
\[ \xymatrix{ \mathbb{Z} \ar@{}[dr]|{(1)} \ar[d] \ar@{=}[r] & \mathbb{Z} \ar[d] \ar@{=}[r] & \mathbb{Z} \ar[d]^b \\
1 \ar@{=}[r] & 1 \ar@{=}[r] & 1. } \]
In fact, (1) is a pullback but $b$ is not a monomorphism.

\begin{prop} \label{short five lemma for reg epis}
Given the commutative diagram below, with $\mathcal{Z}$-exact rows:
\begin{equation} \label{diagram sfl}
\xymatrix{ K \ar[r]^k \ar[d]_u & A \ar[r]^f \ar[d]^a & B \ar[d]^b \\
K' \ar[r]_{k'} & A' \ar[r]_{f'} & B', }
\end{equation}
\begin{itemize}
\item[(a)] if $u$ and $b$ are regular epimorphisms, then $a$ is a regular epimorphism, too;

\item[(b)] if $u$ and $b$ are monomorphisms, then $a$ is a monomorphism, too.
\end{itemize}
\end{prop}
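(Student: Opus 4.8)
The plan is to handle the two parts separately, because the pullback manipulations behind Proposition~\ref{prop short five lemma} and Lemma~\ref{lemma pb iff mono} are unavailable in either case: when $b$ is a regular epimorphism one has in general $\mathsf{Z}(B)\not\cong\mathsf{Z}(B')$, so Lemma~\ref{lemma pb iff mono} cannot be applied to \eqref{diagram sfl}; and when $b$ is merely a monomorphism the right-hand square of \eqref{diagram sfl} need not be a pullback (e.g.\ in $\mathbf{Ring}$), so $a$ cannot be realized as a pullback of $b$.

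For (a), I would take the regular epi--mono factorization $a=mp$, with $p\colon A\to I$ a regular epimorphism and $m\colon I\to A'$ a monomorphism; as strong and regular epimorphisms coincide in a regular category, it suffices to show that $m$ is an isomorphism. Put $h\coloneqq f'm$. From $hp=f'a=bf$ and the fact that $b,f$ are regular epimorphisms, $hp$, and hence $h$, is a regular epimorphism, so $L\xrightarrow{\,l\,}I\xrightarrow{\,h\,}B'$ is $\mathcal{Z}$-exact with $l=\Zker(h)$. The universal properties of the $\mathcal{Z}$-kernels yield comparison maps $\mu\colon L\to K'$ (since $f'(ml)=hl\in\mathcal{N}_{\mathcal{Z}}$, so $ml=k'\mu$) and $\nu\colon K\to L$ (since $h(pk)=b(fk)\in\mathcal{N}_{\mathcal{Z}}$, so $pk=l\nu$); as $k'$ is monic, $k'\mu\nu=ml\nu=mpk=ak=k'u$ forces $\mu\nu=u$. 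Applying Lemma~\ref{lemma pb iff mono} to the rows $L\to I\to B'$ and $K'\to A'\to B'$ (with the identity of $B'$ on the right) shows that the square on sides $l,\mu,m,k'$ is a pullback, so $\mu$, being a pullback of the monomorphism $m$, is a monomorphism; and $u=\mu\nu$ a regular epimorphism forces $\mu$ to be a regular epimorphism. Hence $\mu$ is an isomorphism, and Proposition~\ref{prop short five lemma} applied to
\[ \xymatrix{ L \ar[r]^l \ar[d]_{\mu} & I \ar[r]^h \ar[d]^m & B' \ar@{=}[d] \\ K' \ar[r]_{k'} & A' \ar[r]_{f'} & B' } \]
(whose outer vertical maps $\mu$ and $1_{B'}$ are isomorphisms) gives that $m$ is an isomorphism; thus $a=mp$ is a regular epimorphism.

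For (b), I would compute $\Zker(a)$ directly. Since $u,b,k,k'$ are all monomorphisms, $\mathsf{Z}$ inverts them, and functoriality applied to $ak=k'u$ gives $\mathsf{Z}(a)=\mathsf{Z}(k')\,\mathsf{Z}(u)\,\mathsf{Z}(k)^{-1}$, an isomorphism; so $a$ is inverted by $\mathsf{Z}$, and by the characterization of such monomorphisms established at the end of Section~4 it suffices to prove $\Zker(a)=\varepsilon_A$. To check the universal property of $\varepsilon_A$, let $e\colon E\to A$ satisfy $ae\in\mathcal{N}_{\mathcal{Z}}$. Then $b(fe)=f'(ae)\in\mathcal{N}_{\mathcal{Z}}$; as $b$ is a $\mathsf{Z}$-inverted monomorphism we have $\Zker(b)=\varepsilon_B$, and the universal property of $\Zker(b)$ makes $fe$ factor through $\varepsilon_B$, i.e.\ $fe\in\mathcal{N}_{\mathcal{Z}}$. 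Hence $e=k\bar e$ for a unique $\bar e\colon E\to K$, with $k=\Zker(f)$. Now $ae=k'(u\bar e)\in\mathcal{N}_{\mathcal{Z}}$; since $\Zker(k')=\varepsilon_{K'}$ this gives $u\bar e\in\mathcal{N}_{\mathcal{Z}}$, and since $\Zker(u)=\varepsilon_K$ it yields a factorization $\bar e=\varepsilon_K\psi$. Using naturality $k\varepsilon_K=\varepsilon_A\mathsf{Z}(k)$ we conclude $e=\varepsilon_A(\mathsf{Z}(k)\psi)$, and this factorization is unique because $\varepsilon_A$ is monic. Therefore $\Zker(a)=\varepsilon_A$ and $a$ is a monomorphism.

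The crux in (a) is the isomorphism $\mu$: the image factorization is exactly the device that converts the failing hypothesis $\mathsf{Z}(B)\cong\mathsf{Z}(B')$ into the identity $1_{B'}$, so that Lemma~\ref{lemma pb iff mono} supplies the monomorphism half and the hypothesis on $u$ supplies the regular-epimorphism half, the two combining into an isomorphism to which Proposition~\ref{prop short five lemma} applies. The crux in (b) is that no pullback or cube argument on \eqref{diagram sfl} can work --- the relevant back face is a pullback only when $u$ is an isomorphism --- so one must identify $\Zker(a)$ by hand; the key technical points there are that $a$ is $\mathsf{Z}$-inverted and that a $\mathsf{Z}$-inverted monomorphism $t$ cancels on the left in $\mathcal{N}_{\mathcal{Z}}$, i.e.\ $ts\in\mathcal{N}_{\mathcal{Z}}$ implies $s\in\mathcal{N}_{\mathcal{Z}}$.
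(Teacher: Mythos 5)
Your proof is correct, but it is worth comparing the two routes. For part (a) you follow essentially the paper's skeleton: regular epi--mono factorization $a=mp$, show the image row $K'\to \mathrm{Im}(a)\to B'$ is $\mathcal{Z}$-exact, then apply Proposition \ref{prop short five lemma}. The difference is how the kernel of the image row is identified with $K'$: the paper produces $j\colon K'\to \mathrm{Im}(a)$ directly by the diagonal fill-in of the strong epimorphism $u$ against the monomorphism $m$, and checks $j=\Zker(f'm)$ by pasting two pullbacks; you instead introduce the abstract kernel $L=\Zker(f'm)$ and prove the comparison $\mu\colon L\to K'$ is an isomorphism, getting the mono half from Lemma \ref{lemma pb iff mono} (with $1_{B'}$ on the right) and the epi half from $\mu\nu=u$. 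Both are valid; the paper's version is shorter and avoids the auxiliary object $L$. For part (b) your route is genuinely different: the paper applies Lemma \ref{lemma pb iff mono} to Diagram \eqref{diagram sfl} itself (legitimate because $b$ monic gives $\mathsf{Z}(B)\cong\mathsf{Z}(B')$), concludes that the left-hand square is a pullback, and then uses the protomodular fact that pullbacks reflect monomorphisms (the pullback $u$ of $a$ along $k'$ is monic, hence so is $a$); you instead verify by a direct $\mathcal{N}_{\mathcal{Z}}$-chase, using only universal properties of $\mathcal{Z}$-kernels and naturality of $\varepsilon$, that $\Zker(a)=\varepsilon_A$ and $\mathsf{Z}(a)$ is invertible, and then quote the criterion at the end of Section 4. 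Your chase is more elementary, while the paper re-uses machinery already in place; both ultimately rest on protomodularity (your Section 4 criterion invokes it). One claim in your closing commentary is wrong, though it does not affect your proof: it is not true that ``no pullback or cube argument on \eqref{diagram sfl} can work'' for (b) --- the paper's proof of (b) is exactly such an argument. The face of the cube in Lemma \ref{lemma pb iff mono} that does the work is the one carrying $\mathsf{Z}(B)\cong\mathsf{Z}(B')$ and $b$, which is a pullback precisely because $b$ is monic; the face carrying $u$, whose failure to be a pullback you point out, is not needed.
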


\begin{proof}
\begin{itemize}
\item[(a)] Consider the factorization $a=ip$, where $p$ is a regular epimorphism and $i$ is a monomorphism, and the following commutative diagram:
\[ \xymatrix{ K \ar[r]^k \ar@{->>}[d]_u & A \ar[r]^f \ar@{->>}[d]^p & B \ar@{->>}[d]^b \\
K' \ar[r]^-j \ar@{=}[d] & \text{Im}(a) \ar[r]^-{f' i} \ar@{ m>->}[d]^i & B' \ar@{=}[d] \\
K' \ar[r]_{k'} & A' \ar@{->>}[r]_{f'} & B', } \]
where $j$ is determined by the fact that $u$ is a strong epimorphism and $i$ is a monomorphism. Then $j$ is the $\mathcal{Z}$-kernel of $f'i$: in fact, in the diagram below the two squares are pullbacks:
\[ \xymatrix{ K' \ar@{=}[r] \ar[d]_j & K' \ar[r] \ar[d]^{k'} & \mathsf{Z}(B') \ar[d] \\
\text{Im}(a) \ar@{ m>->}[r]_-i & A' \ar[r]_{f'} & B', } \]
and thus the whole rectangle is a pullback, too. Furthermore, we observe that $f' i$ is a regular epimorphism, since $f' i p = bf$ is; also $f'$ is a regular epimorphism, and so, applying Proposition \ref{prop short five lemma} to the bottom rectangle of the first diagram in this proof, we can deduce that $i$ is an isomorphism. Hence, $a$ is a regular epimorphism.

\item[(b)] Being $b$ a monomorphism, we have that $\mathsf{Z}(B) \cong \mathsf{Z}(B')$. Then we can apply Lemma \ref{lemma pb iff mono} to Diagram \eqref{diagram sfl}, obtaining that the left square in \eqref{diagram sfl} is a pullback. Since in a protomodular category pullbacks reflect monomorphisms, we conclude that $a$ is a monomorphism, too.
\end{itemize}
\end{proof}

Now we study the nine lemma, starting with an auxiliary result:

\begin{lem} \label{lemma reg epi iff reg epi}
Given a commutative diagram
\[ \xymatrix{ & A \ar[r]^f \ar[d]_a & B \ar[d]^b  \\
K' \ar[d]_u \ar[r]^{k'} & A' \ar[d]_{a'} \ar[r]^{f'} & B' \ar[d]^{b'} \\
K'' \ar[r]_{k''} & A'' \ar[r]_{f''} & B'' } \]
with $\mathcal{Z}$-exact rows and columns, if $\mathsf{Z}(B') \cong \mathsf{Z}(B'') \cong \mathsf{Z}(A'')$ and $u$ is a regular epimorphism, then $f$ is, too.
\end{lem}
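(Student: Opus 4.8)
The plan is to move the whole problem onto a single auxiliary object and then reduce the regularity of $f$ to an equality of two subobjects, which the isomorphism form of the short five lemma (Proposition \ref{prop short five lemma}) will settle. The guiding observation is that $u$ and $f$ play dual roles: $u$ is the restriction of $a'$ to the $\mathcal{Z}$-kernels of $f'$ and $f''$, whereas $f$ is the restriction of $f'$ to the $\mathcal{Z}$-kernels of $a'$ and $b'$; the construction below lets $u$ (which is assumed to be a regular epimorphism) directly control $f$.

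First I would form the pullback $A_0 := A' \times_{A''} K''$ of $a'$ along $k'' = \Zker(f'')$, giving a monomorphism $j \colon A_0 \rightarrowtail A'$ (pullback of the mono $k''$) and a regular epimorphism $\sigma \colon A_0 \twoheadrightarrow K''$ (pullback of the regular epi $a'$). Since $k''$ is a monomorphism, $\mathsf{Z}(k'')$ is invertible, so the proposition that pulling back along a map inverted by $\mathsf{Z}$ preserves $\mathcal{Z}$-kernels yields an inclusion $\alpha \colon A \rightarrowtail A_0$ exhibiting $A = \Zker(\sigma)$, with $j\alpha = a$; thus $A \xrightarrow{\alpha} A_0 \xrightarrow{\sigma} K''$ is short $\mathcal{Z}$-exact. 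The relation $a'k' = k''u$ supplies a monomorphism $\beta \colon K' \rightarrowtail A_0$ with $j\beta = k'$ and, crucially, $\sigma\beta = u$. Finally, since $b'(f'j) = f''k''\sigma$ lies in $\mathcal{N}_{\mathcal{Z}}$ (as $f''k'' \in \mathcal{N}_{\mathcal{Z}}$), the map $f'j$ factors as $bg$ for a unique $g \colon A_0 \to B$; identifying $A_0$ with $A' \times_{B'} B$ (both are the pullback of $\varepsilon_{B''}$ along $b'f' = f''a'$), $g$ is the pullback of $f'$ along the mono $b$, hence a regular epimorphism, and one checks $g\alpha = f$ and $g\beta \in \mathcal{N}_{\mathcal{Z}}$.

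The heart of the argument — and the step I expect to be the main obstacle — is the covering identity $A \vee K' = A_0$ in $\mathrm{Sub}(A_0)$, which is exactly where the hypothesis that $u$ is a regular epimorphism enters. Setting $m \colon M := A \vee K' \rightarrowtail A_0$, the composite $\sigma m$ is a regular epimorphism because its image contains $\sigma\beta(K') = u(K') = K''$, and $\Zker(\sigma m) = A$ because $A \subseteq M$ and $A = \Zker(\sigma)$. Hence
\[
\begin{array}{ccccc}
A & \longrightarrow & M & \xrightarrow{\ \sigma m\ } & K'' \\[2pt]
\big\| & & \downarrow{\scriptstyle m} & & \big\| \\[2pt]
A & \xrightarrow{\ \alpha\ } & A_0 & \xrightarrow{\ \sigma\ } & K''
\end{array}
\]
is a morphism of short $\mathcal{Z}$-exact sequences whose outer columns are identities, so Proposition \ref{prop short five lemma} forces the middle column $m$ to be an isomorphism, i.e. $A \vee K' = A_0$. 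The subtlety is that one must build the auxiliary sequence over $K''$ rather than over $B$: this way $K'$ becomes the part that surjects (via $u$) while $A$ stays the $\mathcal{Z}$-kernel, so that the already-established short five lemma performs the role that ``adding the kernel'' plays in the pointed setting.

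It then remains to conclude that $f = g\alpha$ is a regular epimorphism. Because $\mathsf{Z}(A_0) \subseteq \Zker(\sigma) = A$ and $\mathsf{Z}(g)$ is a morphism with codomain in $\mathcal{Z}$, hence a strong (thus regular) epimorphism, naturality of $\varepsilon$ gives $\mathsf{Z}(B) = g(\mathsf{Z}(A_0)) \subseteq g(A)$; moreover $g(K') \subseteq \mathsf{Z}(B)$ since $g\beta \in \mathcal{N}_{\mathcal{Z}}$. Taking regular images, and using that they preserve joins, $g(A_0) = g(A \vee K') = g(A) \vee g(K') = g(A)$; since $g$ is a regular epimorphism this gives $g(A) = B$, so the image of $f = g\alpha$ is all of $B$ and $f$ is a regular epimorphism.
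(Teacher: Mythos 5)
Your construction is genuinely different from the paper's proof: the paper forms $A''\times_{B''}B'$, shows $\langle a',f'\rangle\colon A'\to A''\times_{B''}B'$ is a regular epimorphism via the regular-epi form of the short five lemma, and then exhibits $f$ as a pullback of it; you instead pull back along $a'$, identify $A_0=A'\times_{A''}K''$ with $A'\times_{B'}B$, and reduce everything to a subobject-covering statement settled by the isomorphism form of the short five lemma. Most of your steps do check out: the identification of the two pullbacks (both are the pullback of $\varepsilon_{B''}$ along $b'f'=f''a'$) is correct, so $g$ is a regular epimorphism; $\alpha=\Zker(\sigma)$, $\sigma\beta=u$, $g\alpha=f$, $g\beta\in\mathcal{N}_{\mathcal{Z}}$, and $\mathsf{Z}(A_0)\le A$ all hold; and the short-five-lemma step itself is sound. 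It is also worth noting that your argument never uses the hypothesis $\mathsf{Z}(B')\cong\mathsf{Z}(B'')\cong\mathsf{Z}(A'')$, which the paper's proof needs in order to build its comparison maps $\langle k'',\varepsilon_{B'}\alpha\rangle$ and $\langle\varepsilon_{A''}\beta,b\rangle$; if completed, your argument would prove a stronger statement.

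There is, however, one genuine gap: you invoke the binary join $A\vee K'$ in $\mathrm{Sub}(A_0)$, and at the end the identity $g(A\vee K')=g(A)\vee g(K')$. The lemma is stated for an arbitrary regular protomodular category equipped with $\mathcal{Z}$, and binary joins (unions) of subobjects are not part of regular structure --- they are exactly what passing from regular to coherent categories adds, and nothing in the paper's hypotheses (no coproducts, no cocompleteness) provides them. So the object $M:=A\vee K'$ on which your key step operates need not exist; it does exist in the paper's concrete examples, but not in the stated generality. Fortunately the gap is local and repairable by your own argument, because your short-five-lemma step never uses that $M$ is a \emph{least} upper bound, only that it is \emph{some} subobject of $A_0$ containing $A$ and $K'$ (that alone gives $\sigma m$ regular epi and $\Zker(\sigma m)=A$). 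So take instead $M:=g^{-1}(g(A))$, which always exists as a pullback: it contains $A$ by the unit of the adjunction $g_!\dashv g^*$ between subobject posets, and it contains $K'$ because, as you show, $g(K')\le\mathsf{Z}(B)\le g(A)$. Your argument then yields $g^{-1}(g(A))=A_0$, and since $g$ is a regular epimorphism one has $g_!g^*=\mathrm{id}$ on $\mathrm{Sub}(B)$, whence $g(A)=g_!\bigl(g^{-1}(g(A))\bigr)=g_!(A_0)=B$; therefore $\mathrm{Im}(f)=g(A)=B$ and $f$ is a regular epimorphism, with no mention of joins anywhere.
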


\begin{proof}
Consider the diagram
\begin{equation}\label{diagram 1}
\xymatrix{ K' \ar[d]_u \ar[r]^{k'} & A' \ar[d]_{\langle a', f' \rangle} \ar[r]^{f'} & B' \ar@{=}[d] \\
K'' \ar[r]_-{\langle k'', \varepsilon_{B'} \alpha \rangle} & A'' \times_{B''} B' \ar[r]_-{\pi_{B'}} & B' }
\end{equation}
where $\alpha$ is the upper arrow in the following pullback square:
\[ \xymatrix{ K'' \ar[r]^{\alpha} \ar[d]_{k''} \pullback & \mathsf{Z}(B'') \ar[d]^{\varepsilon_{B''}} \\
A'' \ar[r]_{f''} & B''. } \]
In order to prove the commutativity of \eqref{diagram 1}, we have to show that $f'k'= \varepsilon_{B'} \alpha u$. We know that $f'k'= \varepsilon_{B'} \chi$ (where $\chi \colon K' \rightarrow \mathsf{Z}(B')$). We observe that $b'f'k'=f''k''u= \varepsilon_{B''} \alpha u$; moreover, we notice that $b'f'k'=b' \varepsilon_{B'} \chi= \varepsilon_{B''} \chi$ (with a slight abuse of notation, we are identifying $\mathsf{Z}(B')$ and $\mathsf{Z}(B'')$). Since $\varepsilon_{B''}$ is a monomorphism, we deduce $\alpha u = \chi$, and so $\varepsilon_{B'} \alpha u= \varepsilon_{B'} \chi=f'k'$. We prove that $\langle k'', \varepsilon_{B'} \alpha \rangle$ is the $\mathcal{Z}$-kernel of $\pi_{B'}$. To see this, we notice that in the diagram
\[ \xymatrix{ K'' \ar@/_3pc/[dd]_{k''} \ar[r]^{\alpha} \ar[d]^{\langle k'', \varepsilon_{B'} \alpha \rangle} & \mathsf{Z}(B') \ar@/^3pc/[dd]^{\varepsilon_{B''}} \ar[d]^{\varepsilon_{B'}} \\
A'' \times_{B''} B' \ar[d]^{\pi_{A''}} \ar[r]_-{\pi_{B'}} & B' \ar[d]^{b'} \\
A'' \ar[r]_{f''} & B'' } \]
the rectangle and the bottom square are pullbacks, hence the top square is a pullback, too. If $u$ is a regular epimorphism, then by Proposition \ref{short five lemma for reg epis} we can deduce that $\langle a', f' \rangle$ is a regular epimorphism. We can then construct the commutative diagram:
\begin{equation}\label{diagram 2}
\xymatrix{ A \ar[d]_f \ar[r]^a & A' \ar[d]_{\langle a', f' \rangle} \ar[r]^{a'} & A'' \ar@{=}[d] \\
B \ar[r]_-{\langle \varepsilon_{A''} \beta, b \rangle} & A'' \times_{B''} B' \ar[r]_-{\pi_{A''}} & A'' }
\end{equation}
where $\beta$ is the upper arrow in the following pullback square (again, with a slight abuse of notation, we are identifying $\mathsf{Z}(A'')$ and $\mathsf{Z}(B'')$):
\[ \xymatrix{ B \ar[r]^{\beta} \ar[d]_b \pullback & \mathsf{Z}(B'') \ar[d]^{\varepsilon_{B''}} \\
B' \ar[r]_{b'} & B''. } \]
The commutativity of \eqref{diagram 2} can be proved using a similar argument as the one used for \eqref{diagram 1}. We observe that $\langle \varepsilon_{A''} \beta , b \rangle$ is the $\mathcal{Z}$-kernel of $\pi_{A''}$. In fact, we have the commutative rectangle
\[ \xymatrix{ B  \ar@{}[dr]|{(1)} \ar[r]^-{\langle \varepsilon_{A''} \beta, b \rangle} \ar[d]_{\beta} & A'' \times_{B''} B' \ar@{}[dr]|{(2)} \ar@{->>}[d]_{\pi_{A''}} \ar[r]^-{\pi_{B'}} & B' \ar[d]^{b'} \\
\mathsf{Z}(A'') \ar[r]_{\varepsilon_{A''}} & A'' \ar[r]_{f''} & B'' } \]
where (2) and (1)+(2) are pullbacks, and so (1) is a pullback, too. Hence, applying Lemma \ref{lemma pb iff mono} to \eqref{diagram 2}, we obtain that the left square of \eqref{diagram 2} is a pullback, since $1_{A''}$ is a monomorphism. So, by regularity, we conclude that $f$ is a regular epimorphism.
\end{proof}

\begin{teo} \label{teo nine lemma}
Consider the following commutative diagram:
\begin{equation} \label{3x3 diagram}
\xymatrix{ K \ar[d]_k \ar[r]^u & K' \ar[d]^{k'} \ar[r]^{u'} & K'' \ar[d]^{k''} \\
A \ar[d]_f \ar[r]^a & A' \ar[d]^{f'} \ar[r]^{a'} & A'' \ar[d]^{f''} \\
B \ar[r]_b & B' \ar[r]_{b'} & B''  }
\end{equation}
with $\mathcal{Z}$-exact columns and such that $\mathsf{Z}(B') \cong \mathsf{Z}(B'') \cong \mathsf{Z}(A'')$. Then:
\begin{itemize}
\item[(a)] if $\mathsf{Z}(B) \cong \mathsf{Z}(B')$ and the first two rows are $\mathcal{Z}$-exact, the third row also is;

\item[(b)] if the last two rows are $\mathcal{Z}$-exact, the first row also is;

\item[(c)] if $a' a$ factors through $\mathcal{Z}$ and the first and the last row are $\mathcal{Z}$-exact, the middle row also is.
\end{itemize}
\end{teo}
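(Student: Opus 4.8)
The plan is to prove the three parts of the nine lemma (Theorem \ref{teo nine lemma}) by following the classical strategy for homological categories, adapting it to our $\mathcal{Z}$-exact setting. The main tools available are Proposition \ref{short five lemma for reg epis} (the short five lemma for monomorphisms and regular epimorphisms), Lemma \ref{lemma pb iff mono} (characterizing when a square in a $\mathcal{Z}$-exact ladder is a pullback via a monomorphism condition), Lemma \ref{lemma pb reg epi}, and Lemma \ref{lemma reg epi iff reg epi}. The hypothesis $\mathsf{Z}(B') \cong \mathsf{Z}(B'') \cong \mathsf{Z}(A'')$ is exactly what is needed to invoke these lemmas, since they all require the relevant zero parts to agree. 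Throughout, I will exploit protomodularity in the two standard ways established earlier: pullbacks reflect monomorphisms, and the cancellation property for pullbacks along a regular epimorphism.

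For part (a), I would first check that $f'' a' = b' f'$ forces the bottom row $A'' \to B''$ to have the right $\mathcal{Z}$-kernel. The key steps are: show $f''$ is a regular epimorphism (using that $f'$ is one and $a'$, $b'$ are compatible via the $\mathcal{Z}$-exactness of the columns), then show $k''$ is the $\mathcal{Z}$-kernel of $f''$. To identify the $\mathcal{Z}$-kernel I would form the pullback of $\varepsilon_{B''}$ along $f''$ and use the universal properties coming from the two top rows being $\mathcal{Z}$-exact, together with the column exactness, to produce the required comparison with $K''$; the hypotheses $\mathsf{Z}(B) \cong \mathsf{Z}(B')$ and $\mathsf{Z}(B') \cong \mathsf{Z}(B'')$ guarantee that the zero parts line up so that Lemma \ref{lemma pb iff mono} applies. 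Part (b) is dual in spirit: assuming the last two rows are $\mathcal{Z}$-exact, I would show the top row $K \to K'$ has $u$ with $\mathcal{Z}$-kernel $u' = \Zker$, chasing the pullback squares defining the $\mathcal{Z}$-kernels $k, k', k''$ of the columns and using that $k = \Zker(f)$ etc. Here Lemma \ref{lemma reg epi iff reg epi} is the natural engine: it transfers the regular epimorphism property from the bottom-left vertical map up to $u$, after which identifying $\Zker(u)$ with $K$ is a pullback-pasting argument.

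For part (c), which is the middle version, the hypothesis that $a'a$ factors through $\mathcal{Z}$ says precisely that the composite of the two middle horizontal maps lies in $\mathcal{N}_{\mathcal{Z}}$, so $a$ factors through $\Zker(a') = $ the appropriate object; this is what lets the middle row be a candidate short $\mathcal{Z}$-exact sequence. I would first establish that $a'$ is a regular epimorphism and that $a = \Zker(a')$, obtaining the regular epimorphism property by combining Lemma \ref{lemma reg epi iff reg epi} (applied to suitable sub-diagrams, using the $\mathcal{Z}$-exactness of the first and last rows and of the columns) with Proposition \ref{short five lemma for reg epis}(a). To see $a = \Zker(a')$, I would form the pullback defining $\Zker(a')$ and produce the comparison map with $A$, then verify it is an isomorphism by checking it is both a monomorphism and a regular epimorphism, invoking Proposition \ref{short five lemma for reg epis} on the induced ladder between the first row and the $\mathcal{Z}$-kernel row.

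The hard part will be part (c): unlike (a) and (b), where one row's exactness is deduced from two adjacent rows of the same ``type'', here the middle row sits between the first and the last, so the $\mathcal{Z}$-kernel and regular-epimorphism halves of its exactness must be assembled from information on both sides, and the factorization of $a'a$ through $\mathcal{Z}$ must be used to control the zero parts correctly. The delicate point is ensuring that the various identifications of zero parts (licensed by $\mathsf{Z}(B') \cong \mathsf{Z}(B'') \cong \mathsf{Z}(A'')$ and by the factorization hypothesis) are consistent enough to let Lemma \ref{lemma pb iff mono} and Lemma \ref{lemma reg epi iff reg epi} apply to the relevant squares; keeping track of which $\mathsf{Z}(-)$ is being tacitly identified with which—exactly the ``slight abuse of notation'' already flagged in the proof of Lemma \ref{lemma reg epi iff reg epi}—is where the argument is most likely to require care.
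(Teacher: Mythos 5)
Your toolkit is the right one (Lemmas \ref{lemma pb iff mono} and \ref{lemma reg epi iff reg epi}, Propositions \ref{prop short five lemma} and \ref{short five lemma for reg epis}, protomodularity), but two of the three parts have genuine problems. The most serious is part (a): you are proving a hypothesis, not the conclusion. In Diagram \eqref{3x3 diagram} the rows are the horizontal sequences, so part (a) asks for the $\mathcal{Z}$-exactness of $B \to B' \to B''$, i.e.\ that $b'$ is a regular epimorphism and $b = \Zker(b')$. Your sketch instead sets out to show that $f''$ is a regular epimorphism and that $k'' = \Zker(f'')$; that is the $\mathcal{Z}$-exactness of the third \emph{column}, which is assumed throughout the theorem, and likewise the ``comparison with $K''$'' you propose to build lives in the wrong sequence. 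Consequently the real content of (a) is absent from your plan: one must show that $b$ is a monomorphism (two applications of Lemma \ref{lemma pb iff mono}: first, from $k''$ being a monomorphism and the exactness of the first two rows, deduce that the upper left square of \eqref{3x3 diagram} is a pullback; then, from that pullback and $\mathsf{Z}(B) \cong \mathsf{Z}(B')$, deduce that $b$ is a monomorphism), then compare $B$ with $K(b') \coloneqq \Zker(b')$ via the map $l$ satisfying $hl = b$ (which exists because $b'b \in \mathcal{N}_{\mathcal{Z}}$, by Proposition \ref{cancellation in NZ with strong epis}), and finally prove that $l$ is an isomorphism: a monomorphism because $b$ is one, and a regular epimorphism because the induced map $g \colon A \to K(b')$ is one by Lemma \ref{lemma reg epi iff reg epi}.

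Part (c) has a gap at its crux. Obtaining $a'$ as a regular epimorphism from Proposition \ref{short five lemma for reg epis}(a) applied to the two right-hand columns is correct, and comparing $A$ with $K(a') = \Zker(a')$ via a map $\varphi$ and the ladder $(1_K, \varphi, 1_B)$ is the right idea. But every short-five-lemma statement in the paper requires \emph{both} rows of the ladder to be $\mathcal{Z}$-exact, and the exactness of the auxiliary row $K \xrightarrow{\varphi k} K(a') \xrightarrow{j} B$ --- specifically, that $\varphi k = \Zker(j)$ --- is not automatic from the construction of $K(a')$; it is exactly the missing step, so as written your appeal to the short five lemma is circular. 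The paper supplies this step by applying part (b) of the theorem itself to the auxiliary $3 \times 3$ diagram with rows $K \to K(a') \to B$, $K' \to A' \to B'$, $K'' \to A'' \to B''$: its columns are the first row of \eqref{3x3 diagram}, the sequence $K(a') \to A' \to A''$, and the third row of \eqref{3x3 diagram}, all $\mathcal{Z}$-exact, and its last two rows are the exact columns of the original diagram, so (b) yields $\varphi k = \Zker(j)$, after which Proposition \ref{prop short five lemma} makes $\varphi$ an isomorphism. Your part (b), finally, is directionally correct (cube pasting for the kernel half, Lemma \ref{lemma reg epi iff reg epi} in transposed form for the epi half), but note that it is $u'$, not $u$, that must be shown to be a regular epimorphism, and $\Zker(u')$, not $\Zker(u)$, that must be identified with $K$; moreover the corner maps of the cube require first observing that $u'u \in \mathcal{N}_{\mathcal{Z}}$, which follows from $k''u'u = a'ak \in \mathcal{N}_{\mathcal{Z}}$ and $k''$ being a monomorphism.
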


\begin{proof}
\begin{itemize}
\item[(a)] The fact that $b'$ is a regular epimorphism follows from the observation that $b'f' = f''a'$ is. Being $k''$ a monomorphism, we get $\mathsf{Z}(K'') \cong \mathsf{Z}(A'')$ and we can apply Lemma \ref{lemma pb iff mono} to the upper part of Diagram \eqref{3x3 diagram} to conclude that the upper left square is a pullback. The assumption that $\mathsf{Z}(B) \cong \mathsf{Z}(B')$ allows to apply Lemma \ref{lemma pb iff mono} again to the left part of Diagram \eqref{3x3 diagram}, giving that $b$ is a monomorphism. Let $h \colon K(b') \to B'$ be the $\mathcal{Z}$-kernel of $b'$. Consider the commutative diagram
\[ \xymatrix{ & & & B \ar[dl]^l \ar@/^/[ddl]^b \\
K \ar[d]_u \ar[r]^k & A \ar[d]_a \ar[r]^g \ar@/^/[urr]^f & K(b') \ar[d]^h & \\
K' \ar[r]_{k'} & A' \ar[r]_{f'} & B' \ar[d]^{b'} & \\
& & B'', & } \]
where $g$ is determined by the fact that $b'f'a=f''a'a$ factors through $\mathcal{Z}$. Moreover, observing that $b'bf=f''a'a$ factors through $\mathcal{Z}$ and applying Lemma \ref{cancellation in NZ with strong epis}, we obtain that $b'b$ factors through $\mathcal{Z}$. Hence, there exists a unique arrow $l$ such that $hl=b$. Recalling that $hlf=bf=f'a=hg$ and that $h$ is a monomorphism, we show that $lf = g$. Thanks to Lemma \ref{lemma reg epi iff reg epi} we observe that $g$ is a regular epimorphism, so $l$ is a regular epimorphism, too. Furthermore, since $b$ is a monomorphism, we get that $l$ is a monomorphism. Thus, $l$ is an isomorphism.

\item[(b)] By assumption $b \colon B \to B'$ is a monomorphism, so $\mathsf{Z}(B) \cong \mathsf{Z}(B')$ and, by Lemma \ref{lemma pb iff mono}, the upper left square in Diagram \eqref{3x3 diagram} is a pullback. Consider then the commutative cube
\[ \xymatrix{ K \ar[dd]_u \ar[dr]^k \ar[rr]^{\kappa} & & \mathsf{Z}(K'') \ar[dr]^{\simeq} \ar[dd] & \\
& A \ar[dd]_(.3)a \ar[rr]^(.3){\alpha} & & \mathsf{Z}(A'') \ar[dd] \\
K' \ar[dr]_{k'} \ar[rr]^(.7){u'} & & K'' \ar[dr]^{k''} & \\
& A' \ar[rr]_{a'} & & A'', } \]
where $\mathsf{Z}(K'') \cong \mathsf{Z}(A'')$ because $k''$ is a monomorphism, the existence of the arrow $\alpha$ is guaranteed by the fact that $a' a$ factors through $\mathsf{Z}(A'')$, and similarly $\kappa$ is given by the fact that $u' u$ factors through $\mathsf{Z}(K'')$. The left, right and front faces of the cube are pullbacks, because $a$ is the $\mathcal{Z}$-kernel of $a'$. Hence the back face is a pullback, proving that $u$ is the $\mathcal{Z}$-kernel of $u'$. Moreover, $u'$ is a regular epimorphism thanks to Lemma \ref{lemma reg epi iff reg epi}, since $\mathsf{Z}(B') \cong \mathsf{Z}(B'') \cong \mathsf{Z}(A'')$.

\item[(c)] Applying Proposition \ref{short five lemma for reg epis} to the right part of Diagram \eqref{3x3 diagram} we get that $a'$ is a regular epimorphism. Let $n' \colon K(a') \to A'$ be its $\mathcal{Z}$-kernel. Consider the following diagram:
\[ \xymatrix{ K \ar@{=}[d] \ar[r]^k & A \ar[d]^{\varphi} \ar[r]^f & B \ar@{=}[d] \\
K \ar[r]_{\varphi k} & K(a') \ar[d]^{n'} \ar[r]_j & B \ar[d]^b \\
& A' \ar[d]^{a'} \ar[r]_{f'} & B' \ar[d]^{b'} \\
& A'' \ar[r]_{f''} & B'', } \]
where $\varphi$ is induced by the fact that $a' a$ factors through $\mathcal{Z}$ and, similarly, $j$ is induced by the fact that $b' f' n' = f'' a' n'$ factors through $\mathcal{Z}$ and $b$ is the $\mathcal{Z}$-kernel of $b'$. The upper right square commutes, since $bj\varphi = f' n' \varphi = f' a = bf$ and $b$ is a monomorphism. This implies, in particular, that $j$ is a regular epimorphism. Moreover, we can conclude that $\varphi k$ is the $\mathcal{Z}$-kernel of $j$ by applying point (b) of the present theorem to the diagram
\[ \xymatrix{ K \ar[d]_u \ar[r]^{\varphi k} & K(a') \ar[d]^{n'} \ar[r]^{j} & B \ar[d]^b \\
K' \ar[d]_{u'} \ar[r]^{k'} & A' \ar[d]^{a'} \ar[r]^{f'} & B' \ar[d]^{b'} \\
K'' \ar[r]_{k''} & A'' \ar[r]_{f''} & B''. } \]
Then we can apply Proposition \ref{prop short five lemma} to the diagram
\[ \xymatrix{ K \ar@{=}[d] \ar[r]^k & A \ar[d]^{\varphi} \ar[r]^f & B \ar@{=}[d] \\
K \ar[r]_{\varphi k} & K(a') \ar[r]_{j} & B } \]
to conclude that $\varphi$ is an isomorphism.
\end{itemize}
\end{proof}

In the case of pointed or quasi-pointed regular protomodular categories, where the subcategory $\mathcal{Z}$ is reduced to the initial object, every regular epimorphism is the $\mathcal{Z}$-cokernel of its $\mathcal{Z}$-kernel (as observed in \cite{Bourn 3x3}). So the notion of short exact sequence considered in \cite{Bourn 3x3} coincide with ours, and our previous theorem reduces to the nine lemma proved there. However, our version applies to several examples that are not covered by the results in \cite{Bourn 3x3} nor by the star-regular context with enough trivial objects considered in \cite{GJR 3x3 lemma}. Among these examples there are the duals of elementary toposes and the protomodular varieties with more than one constant. Indeed, in this context there are not enough trivial objects in the sense of \cite{GJR 3x3 lemma}, since it is not true that the subcategory $\mathcal{Z}$ is closed under squares. For example, in $\mathbf{Ring}$, $\mathbb{Z} \times \mathbb{Z}$ is not in $\mathcal{Z}$.

\section*{Acknowledgements}
Both authors are members of the Gruppo Nazionale per le Strutture Algebriche, Geometriche e le loro Applicazioni (GNSAGA) dell'Istituto Nazionale di Alta Matematica ``Francesco Severi''.

This work was supported by Shota Rustaveli National Science Foundation of Georgia (SRNSFG), through grant FR-24-9660.

\end{document}